\documentclass[11 pt]{amsart}

\usepackage{amsmath}             
\usepackage{amssymb}             
\usepackage{latexsym}            
\usepackage[active]{srcltx}
\usepackage{ae}
\numberwithin{figure}{section}


%

\numberwithin{figure}{section}

\newtheorem{theorem}{Theorem}[section]
\newtheorem{lemma}[theorem]{Lemma}

\theoremstyle{definition}
\newtheorem{definition}[theorem]{Definition}

\newtheorem{remark}[theorem]{Remark}
\numberwithin{equation}{section}


\newcommand{\norm}[1]{\left|\left|#1\right|\right|}

\def\vint_#1{\mathchoice%
          {\mathop{\kern 0.2em\vrule width 0.6em height 0.69678ex depth -0.58065ex
                  \kern -0.8em \intop}\nolimits_{\kern -0.4em#1}}%
          {\mathop{\kern 0.1em\vrule width 0.5em height 0.69678ex depth -0.60387ex
                  \kern -0.6em \intop}\nolimits_{#1}}%
          {\mathop{\kern 0.1em\vrule width 0.5em height 0.69678ex depth -0.60387ex
                  \kern -0.6em \intop}\nolimits_{#1}}%
          {\mathop{\kern 0.1em\vrule width 0.5em height 0.69678ex depth -0.60387ex
                  \kern -0.6em \intop}\nolimits_{#1}}}
\def\vintslides_#1{\mathchoice%
          {\mathop{\kern 0.1em\vrule width 0.5em height 0.697ex depth -0.581ex
                  \kern -0.6em \intop}\nolimits_{\kern -0.4em#1}}%
          {\mathop{\kern 0.1em\vrule width 0.3em height 0.697ex depth -0.604ex
                  \kern -0.4em \intop}\nolimits_{#1}}%
          {\mathop{\kern 0.1em\vrule width 0.3em height 0.697ex depth -0.604ex
                  \kern -0.4em \intop}\nolimits_{#1}}%
          {\mathop{\kern 0.1em\vrule width 0.3em height 0.697ex depth -0.604ex
                  \kern -0.4em \intop}\nolimits_{#1}}}

\newcommand{\kint}{\vint}

\newcommand{\aveint}[2]{\mathchoice%
          {\mathop{\kern 0.2em\vrule width 0.6em height 0.69678ex depth -0.58065ex
                  \kern -0.8em \intop}\nolimits_{\kern -0.45em#1}^{#2}}%
          {\mathop{\kern 0.1em\vrule width 0.5em height 0.69678ex depth -0.60387ex
                  \kern -0.6em \intop}\nolimits_{#1}^{#2}}%
          {\mathop{\kern 0.1em\vrule width 0.5em height 0.69678ex depth -0.60387ex
                  \kern -0.6em \intop}\nolimits_{#1}^{#2}}%
          {\mathop{\kern 0.1em\vrule width 0.5em height 0.69678ex depth -0.60387ex
                  \kern -0.6em \intop}\nolimits_{#1}^{#2}}}

\newcommand{\eps}{\varepsilon}
\newcommand{\R}{\mathbb{R}}

\renewcommand{\limsup}{\operatornamewithlimits{lim \, sup}}

\newcommand{\esssup}{\operatornamewithlimits{ess\, sup}}

\renewcommand{\l}{\left}
\renewcommand{\r}{\right}
\newcommand{\parts}[2]{\frac{\partial {#1}}{\partial {#2}}}

\numberwithin{equation}{section}

\newcommand{\trm}{\textrm}
\newcommand{\half}{{\frac{1}{2}}}
\newcommand{\q}[1]{Q_{#1}}

\newcommand{\Om}{\Omega}

\newcommand{\N}{N^{1,2}}

\newcommand{\limminus}{{\mathchoice{\raise.17ex\hbox{$\scriptstyle -$}}
                {\raise.17ex\hbox{$\scriptstyle -$}}
                {\raise.1ex\hbox{$\scriptscriptstyle -$}}
                {\scriptscriptstyle -}}}
\newcommand{\limplus}{{\mathchoice{\raise.17ex\hbox{$\scriptstyle +$}}
                {\raise.17ex\hbox{$\scriptstyle +$}}
                {\raise.1ex\hbox{$\scriptscriptstyle +$}}
                {\scriptscriptstyle +}}}

\newcommand{\abs}[1]{\left| #1 \right|}
\newcommand{\Lip}{\operatorname{Lip}}
\newcommand{\beq}{\begin{equation}}
\newcommand{\eeq}{\end{equation}}


\begin{document}

\title[Global higher integrability of parabolic quasiminimizers]{Global higher integrability for parabolic quasiminimizers in metric spaces}

\author{Mathias Masson and Mikko Parviainen}
\address{Mathias Masson,
Aalto University, School of Science and Technology, Department of Mathematics,
PO Box 11000, FI-00076 Aalto,
Finland}
\email{mathias.masson.finland@gmail.com}

\address{Mikko Parviainen,
Department of Mathematics and Statistics, P.O.Box (MaD) FI-40014 University of Jyväskylä, 
Finland
}
\email{mikko.j.parviainen@jyu.fi}

\thanks{}

\subjclass[2010]{Primary 30L99, 35K92}

\keywords{Higher integrability, reverse Hölder inequality, parabolic quasiminima, Newtonian space, upper gradient, calculus of variations, nonlinear parabolic equations, analysis on metric spaces}

\begin{abstract}
We prove higher integrability up to the boundary for minimal $p$-weak upper gradients of parabolic quasiminimizers in metric measure spaces, related to the heat equation. We assume the underlying metric measure space to be equipped with a doubling measure and to support a weak Poincaré-inequality.
\end{abstract}
\maketitle

\section{Introduction}

The problem of finding a solution to the classical heat equation
\begin{align*}
-\frac{\partial u}{\partial t}+\Delta u=0,
\end{align*}
 in a parabolic cylinder $\Omega\times (0,T)$ can be reformulated into the variational problem of finding a function $u$ such that with $K=1$ we have
\begin{equation}\label{intro parabolic quasiminimizer}
\begin{split}
2\int_{\{\phi\neq 0\}}u\parts{\phi}{t} \,d x \,d t + &\int_{\{\phi\neq 0\}}
\abs{\nabla u}^2
 \,d x \,d t\\
 &\leq K\int_{ \{\phi\neq 0\}}
 \abs{\nabla(u+\phi)}^2\,d x \,d t, 
\end{split}
\end{equation}
for all compactly supported test functions $\phi\in C_0^{\infty}(\Omega\times (0,T))$. Here $\Omega$ denotes a bounded domain in $\R^d$. A generalization of this minimization problem is to consider inequality \eqref{intro parabolic quasiminimizer} with the relaxed assumption $K\geq 1$: a function $u$ satisfying this generalized condition is then called a parabolic quasiminimizer \cite{wieser87} related to the heat equation. 

Our main result, Theorem \ref{thm:global-high-int}, is to show that if $u$ is a parabolic quasiminimizer  in the general metric measure space setting, and satisfies a Dirichlet type parabolic boundary condition, where the domain is assumed to be regular enough, then $u$ has the following global higher integrability property:  The upper gradient \cite{HeinKosk98} of $u$ is  integrable over the whole cylinder $\Omega\times (0,T)$ to a slightly higher power than initially assumed.

Assuming a weak Poincaré inequality, a doubling measure and a thickness condition for the complement of the domain $\Omega$, we  prove a parabolic Poincaré and Caccioppoli type estimate for $u$ up to the boundary. Then we combine these with Sobolev's inequality and a self improving property for the thickness condition \cite{Lewi88},\cite{BjorMacmShan01} to establish a reverse Hölder inequality up to the boundary.

The novelty of this paper is that we prove these estimates in the general metric measure space setting, using a purely variational approach. No reference is made to the concept of a weak solution, or to the explicit scaling properties of the measure. Furthermore, no assumptions of translation invariance or absolute continuity of the underlying measure are made. Instead we base the proofs on taking integral averages and on the doubling property of the measure. On the other hand, the concept of parabolic quasiminimizers is extended into metric spaces by replacing gradients with the more general concept of upper gradients, which do not require the existence of partial derivatives.

The concept of quasiminimizers originates from the elliptic case in Euclidean spaces, where Giaquinta and Giusti showed in their celebrated papers \cite{giaquintag82, giaquintag84} that many properties of weak solutions to elliptic PDEs generalize to a class of elliptic quasiminimizers
\[
\begin{split}
\int_{\{\phi\neq 0\}} \abs{\nabla u}^2 \,d x\le K \int_{\{\phi\neq 0\}} \abs{\nabla (u+\phi)}^2 \,d x.
\end{split}
\]
Hence, to some extent quasiminimizers provide a unifying approach to the theory of elliptic nonlinear partial differential equations. Indeed, for example a solution $u$ to 
$$
\operatorname{div}F(\nabla u)=0
$$ 
under suitable regularity assumptions and the growth bounds 
$$\alpha \abs{\nabla u}^2 \le F(\nabla u)\le \beta \abs{\nabla u}^2,$$
 is a quasiminimizer with a constant $K=\beta/\alpha$. However, in other respects, the theory of quasiminimizers differs from that of minimizers. For example, quasiminimizers do not provide a unique solution to the Dirichlet problem, and they do not obey the  comparison principle. One advantage of quasiminimizers is that they allow for replacing the gradients with a comparable concept which is definable in a more general setting. This way, by using upper gradients, Kinnunen and Shanmugalingam \cite{KinnShan01} were able to extend the concept of quasiminimizers to metric measure spaces.

Following Giaquinta and Giusti, parabolic quasiminimizers were introduced in the Euclidean setting by Wieser \cite{wieser87}.  In recent papers \cite{KinnMaroMiraParo12}, \cite{MassSilj11}, \cite{MaroMass12}, \cite{MassMiraParoParv12}, following Kinnunen and Shanmugalingam, the definition and study of parabolic quasiminimizers has been extended to metric measure spaces. In this paper we follow the same approach. 


Substantial progress was made in the mid-1950s and -1960s in the
regularity theory of elliptic equations due to the discoveries of
De Giorgi \cite{degiorgi57}, Nash \cite{nash58} and Moser
\cite{moser60,moser61}.  A natural question was, whether these results extend to systems as well.  Morrey \cite{morrey68} proved that up to a set of measure zero a solution to a elliptic systems is regular. However, it was soon discovered by De Giorgi \cite{degiorgi68} followed by Giusti and Miranda \cite{giustim68}, that full regularity for systems actually fails, and thus the partial regularity is best one can, in general, hope for.

The generalizations of Morrey's partial regularity result (Giaquinta and Giusti \cite{giaquintag78} as well as Giaquinta and Modica \cite{giaquintam79}) rely on the higher integrability of the gradient.  Such results  for elliptic PDEs were obtained by  Bojarski \cite{bojarski57} as well as Meyers \cite{meyers63}, and by Gehring \cite{gehring73} in the context of quasiconformal mappings.   In \cite{elcm75}, Elcrat and
Meyers proved the local higher integrability for nonlinear
elliptic systems. Later, in \cite{giaqs82}, Giaquinta and Struwe studied similar questions for
systems of parabolic equations with quadratic growth conditions, and in \cite{kinnunenl00} Kinnunen and Lewis showed that $p$-parabolic type systems share the higher integrability property as well.  

Another natural direction to extend regularity results is to consider regularity up to the boundary. Already elliptic examples in \cite{kilpelainenk94} demonstrate that both the regularity of the boundary as well as the boundary values play a role in the proofs. Recently, local and global higher integrability questions have inspired an extensive literature, see for example \cite{granlund82}, \cite{wieser87}, \cite{arkhipova95}, \cite{Misa06}, \cite{AcerMing07},\cite{parviainen08},  \cite{bogelein08}, \cite{bogelein09}, \cite{parviainen09}, \cite{parviainen09b}, \cite{bogeleinp10}, \cite{byunr10}, \cite{byunrw10}, \cite{bogeleindm11}, \cite{fugazzola12}, and  \cite{habermann}.

\section{Preliminaries}

\subsection{Doubling measure}

Let
$X=(X,d,\mu)$ be a
complete linearly locally convex metric space endowed with a positive 
doubling Borel measure $\mu $ which supports a weak $(1,2)$-Poincaré inequality. 

The measure $\mu$ is called \emph{doubling} if there exists a constant
$c_\mu \geq 1$, such that for all balls $B= B(x_0,r):=\{x\in X:
d(x,x_0)<r\}$ in~$X$,
\begin{equation*}
        \mu(2B) \le c_\mu \mu(B),
\end{equation*}
where $\lambda B=B(x_0,\lambda r)$. By iterating the doubling condition, it follows with
$s=\log_{2}c_{\mu}$ and $C=c_\mu^{-2}$ that
\begin{equation} \label{eq:DoublingConsequence}
\frac{\mu(B(z,r))}{\mu(B(y,R))} \geq C\Bigl(\frac{r}{R}\Bigr)^{s},
\end{equation}
for all balls $B(y,R)\subset X$, $z \in B(y,R)$
and $0 < r \leq R < \infty$.
However, the choice $s=\log_2 c_\mu$ may not be optimal,
and we just assume that $s$ is any number such that
\eqref{eq:DoublingConsequence} is satisfied. From now on, throughout this text we assume that $c_\mu>1$ and so $s>0$.

A metric space $X$ is called linearly locally convex if there exists constants $C_1>0$ and $r_1>0$ such that for all balls $B$ in $X$ with radius at most $r_1$, every pair of distinct points in the annulus $2B\setminus \overline B$ can be connected by a curve lying in the annulus $2C_1B\setminus C_1^{-1}\overline B$, see Section 3.12 in \cite{HeinKosk98} and \cite{BjorMacmShan01}. The assumption that $X$ is linearly locally convex will be needed for Theorem \ref{self improving thickness} below.

\subsection{Notation}

Next we introduce more notation  used throughout this paper. 
 Given  any $z_0=(x_0,t_0)\in X\times \R \trm{ and
}{\rho}>0,$ let
\[
B_\rho(x_0)=\{\,x\in X\, :\, d(x,x_0)<\rho\,\},
\]
denote an open ball in $X$, and let
\[
\Lambda_{\rho}(t_0)=(t_0-\half {\rho}^2,\, t_0+\half {\rho}^2),
\]
denote an open interval in $\R$. A space-time cylinder in
$X\times \R$ is denoted by
\[
\q{{\rho}}(z_0)=B_{\rho}(x_0)\times \Lambda_{\rho}(t_0),
\]
so that $\nu(\q{{\rho}}(z_0))=\mu(B_{\rho}(x_0))\rho^2$.
When no confusion arises, we shall omit the reference points
  and write briefly $B_{\rho},\ \Lambda_{\rho}$ and $Q_{\rho}$.
 We denote the product measure by $
d \nu=d \mu\, d t$. The integral average of $u$ is
denoted by
\begin{align}\label{spatial mean value}
u_{B_\rho}(t)=\kint_{B_{\rho}} u(x,t)\, d \mu=\frac{1}{\mu({B_{\rho})}}\int_{B_{\rho}} u(x,t)\, d \mu
\end{align}
and
\[
\begin{split}
\kint_{Q_\rho} u \,d \nu =\frac{1}{\nu(Q_\rho)}\int_{Q_\rho} u \,d \nu.
\end{split}
\]

%
\subsection{Upper gradients.} Following \cite{HeinKosk98}, a non-negative Borel measurable function $g: X \rightarrow [0, \infty]$ is said to be an upper gradient of a function $u: X\rightarrow [-\infty, \infty]$, if for all compact rectifiable paths $\gamma$ joining $x$ and $y$ in $X$ we have 
\begin{align}\label{upper gradient}
|u(x)-u(y)|\leq \int_{\gamma} g \,ds.
\end{align}
In case $u(x)=u(y)=\infty$ or $u(x)=u(y)=-\infty$, the left side is defined to be $\infty$. Assume $1\leq p <\infty$. The $p$-modulus of a family of paths $\Gamma$ in $X$ is defined to be
\begin{align*}
 \inf_\rho \int_X \rho^p \,d\mu,
\end{align*}
where the infimum is taken over all non-negative Borel measurable functions $\rho$ such that for all rectifiable paths $\gamma$ which belong to $\Gamma$, we have
\begin{align*}
 \int_{\gamma} \rho \, ds \geq 1.
\end{align*}
A property is said to hold for $p$-almost all paths, if the set of non-constant paths for which the property fails is of zero $p$-modulus. Following \cite{KoskMacm98,Shan00}, if \eqref{upper gradient} holds for $p$-almost all paths $\gamma$ in $X$, then $g$ is said to be a $p$-weak upper gradient of $u$. 

When $1<p<\infty$ and $u\in L^p(X)$, it can be shown \cite{Shan01} that  there exists a minimal $p$-weak upper gradient of $u$, we denote it by $g_u$, in the sense that $g_u$ is a $p$-weak upper gradient of $u$
and for every $p$-weak upper gradient $g$ of $u$ it holds $g_u\leq g$ $\mu$-almost everywhere in $X$. 
Moreover, if $v=u$ $\mu$-almost everywhere in a Borel set $A\subset X$, then $g_v=g_u$ $\mu$-almost everywhere in $X$. Also, if $u,v\in L^p(X)$, then $\mu$-almost everywhere in $X$, we have
\begin{align*}
&g_{u+v}\leq g_u+g_v,\\
&g_{uv}\leq |u|g_v+|v|g_u.
\end{align*}
Proofs for these properties and more on upper gradients in metric spaces can be found for example in \cite{BjorBjor11} and the references therein. See also \cite{Chee99} for a discussion on upper gradients.

\subsection{Newtonian spaces.} Following \cite{Shan00}, for  $1<p<\infty$, and $u\in L^p(\Omega)$ where $\Omega\subset X$ is a domain, we define
\begin{align*}
\|u\|^p_{1,p,\Omega}=\|u\|^p_{L^p(\Omega,\mu)}+\|g_u\|^p_{L^p(\Omega,\mu)},
\end{align*}
and
\begin{align*}
\widetilde N^{1,p}(\Omega)= \{ u\,:\, \|u\|_{1,p,\Omega}<\infty\}.
\end{align*}
An equivalence relation in $\widetilde N^{1,p}(\Omega)$ is defined by saying that $u\sim v$ if
\begin{align*}
 \|u-v\|_{\widetilde N^{1,p}(\Omega)}=0.
\end{align*}
The \emph{Newtonian space} $N^{1,p}(\Omega)$ is defined to be the space $\widetilde N^{1,p}(\Omega)/ \sim$, with the norm
\begin{align*}
 \|u\|_{N^{1,p}(\Omega)}=\|u\|_{1,p,\Omega}.
\end{align*}

A function $u$ belongs to the local Newtonian space $N_{\textrm{loc}}^{1,p}(\Omega)$ if it belongs to $N^{1,p}(\Omega')$ for every $\Omega' \subset \subset \Omega$. The Newtonian space with zero boundary values is defined as $N_0^{1,p}(\Omega)=\{\, f\in N^{1,p}(\Omega)\,:\,f$ can be continued into a function in $N^{1,p}(X)$ by setting $f=0$ outside $\Omega\,\}$. For more properties of Newtonian spaces, see \cite{Hein01,Shan00, BjorBjor11}.

\subsection{Poincar\'e's and Sobolev's inequality}
For $1\leq q <\infty$, $1< p < \infty$, the measure $\mu$ is said to support a weak $(q,p)$-Poincar\'e inequality if there exist constants $c_P>0$ and $\lambda\ge 1$ such that
\begin{equation}
\l(\vint_{B_\rho(x)}|v-v_{B_\rho(x)}|^q \, d\mu\r)^{1/q} \le c_P \rho\left(\vint_{ B_{\lambda\rho}(x)} g_v^p \, d\mu\right)^{1/p},
\end{equation}
for every $v\in N^{1,p}(X)$ and $B_\rho(x)\subset X$. In case $\lambda=1$, we say a $(q,p)$-Poincaré inequality is in force. In a general metric measure space setting, it is of interest to have assumptions which are invariant under bi-Lipschitz mappings. The weak $(q,p)$-Poincaré inequality has this quality. 

For a metric space $X$ equipped with a doubling measure $\mu$, it is a result by Hajlasz and Koskela \cite{HajlKosk95} that the following Sobolev inequality holds: If  $X$ supports a weak $(1,p)$-Poincaré inequality for some $1<p<\infty$, then $X$ also supports a weak $(\kappa,p)$-Poincaré
inequality, where
\begin{equation*}
\kappa=\begin{cases}\frac{d_\mu p}{d_\mu-p},  &\text{for} \quad 1<p<d_\mu, \\ 2p, &\text{otherwise}, \end{cases}
\end{equation*}
possibly with different constants $c_P'>0$ and $\lambda'\geq 1$. 

\begin{remark} \label{poincare_remark}
It is a recent result by Keith and Zhong \cite{KeitZhon08}, that when $1<p<\infty$ and $(X,d)$ is a complete metric space with doubling measure $\mu$, the weak $(1,p)$-Poincaré inequality implies a weak $(1,q)$-Poincar\'e inequality for some $1<q<p$. Then by the above discussion, $X$ also supports a weak $(\kappa,q)$-Poincaré inequality with $\kappa>q$ as above. By Hölder's inquality, we can assume that $q$ is close enough to $p$, so that $\kappa\geq p$. By Hölder's inequality, the left hand side of the weak $(\kappa,q)$-Poincaré inequality can be estimated from below by replacing $\kappa$ with any positive $\kappa'<\kappa$. Hence we conclude, that if $X$ supports a weak $(1,p)$-Poincar\'e inequality with $1<p<\infty$, then $X$ also supports a weak $(p,q)$-Poincar\'e and a weak $(q,q)$-Poincar\'e inequality with some $1<q<p$.
\end{remark}


\subsection{Parabolic spaces and upper gradients}

For $1<p<\infty$, we say that
\[
u\in L^p(0,T;N^{1,p}(\Om)),
\]
if the function $x
\mapsto u(x,t)$ belongs to $N^{1,p}(\Om)$ for almost every $0 < t <
T$, and $u(x,t)$ is measurable as a mapping from $(0,T)$ to $N^{1,p}(\Om)$, that is, the preimage on $(0,T)$ for any given open set in $N^{1,p}(\Om)$ is measurable. 
Furthermore, we require that the norm
\[
\|u\|_{L^p(0,T;N^{1,p}(\Omega))}=\left(\int_{0}^{T}\norm{u}^p_{N^{1,p}(\Om)}dt\right)^{1/p}
\]
is finite. Analogously, we define 
$L^p(0,T;N_0^{1,p}(\Om))$ and $L_{\textrm{loc}}^p(0,T;N_{\textrm{loc}}^{1,p}(\Omega))$. The space of compactly supported Lipschitz-continuous functions $\Lip_c(\Om_T)$ consists of functions $u,\, \textrm{supp}\, u\subset \Om_T$, for which there exists a positive constant $C_{\Lip}(u)$ such that
\[
\begin{split}
\abs{u(x,t)-u(y,s)}\leq C_{\Lip}(u) (d(x,y)+\abs{t-s}),
\end{split}
\]
whenever $(x,t),(y,s) \in \Om_T$.
The parabolic minimal $p$-weak upper gradient of a function $u\in L_{\textrm{loc}}^p(t_1,t_2;N_{\textrm{loc}}^{1,p}(\Omega))$ is defined in a natural way by setting
\begin{align*}
g_u(x,t)=g_{u(\cdot,t)}(x), 
\end{align*}
at $\nu$-almost every $(x,t)\in \Omega\times (0,T)$. When $u$ depends on time, we refer to $g_u$ as the upper gradient of $u$. The next Lemma on taking limits of upper gradients will be used later in this paper. Here and throughout this paper we denote the time wise mollification of a function by
\begin{align*}
f_\varepsilon(x,t)=\int_{-\varepsilon}^\varepsilon \zeta_\varepsilon(s)f(x,t-s)\,ds,
\end{align*}
where $\zeta_\varepsilon$ is  the standard mollifier with support in $(-\varepsilon,\varepsilon)$.
\begin{lemma}\label{convergence of upper gradient} Let $u\in L_\textrm{loc}^p(0,T;N_{\textrm{loc}}^{1,p}(\Omega))$, where $1<p<\infty$. Then the following statements hold:
\begin{itemize} \item[(a)] As $s\rightarrow 0$, we have $g_{u(x,t-s)-u(x,t)}\rightarrow 0$ in $L_{\textrm{loc}}^p(\Omega_T)$.
\item[(b)] As $\varepsilon\rightarrow 0$, we have $g_{u_\varepsilon -u} \rightarrow 0$ pointwise $\nu$-almost everywhere in $\Omega_T$ and in $L_{\textrm{loc}}^p(\Omega_T)$.
\end{itemize}
\begin{proof}
See Lemma 6.8 in \cite{MassSilj11}. 
\end{proof}
\end{lemma}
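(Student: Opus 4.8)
The plan is to reduce both assertions to soft functional analysis for $u$ regarded as a time-dependent function valued in the Banach space $N^{1,p}(\Omega')$, $\Omega'\subset\subset\Omega$. The two inputs I would use are: that $t\mapsto u(\cdot,t)$ belongs to $L^p(t_1,t_2;N^{1,p}(\Omega'))$ whenever $[t_1,t_2]\subset\subset(0,T)$, so that translations act continuously on it and almost every $t$ is a Lebesgue point of it; and, from the upper gradient side, only the sublinearity $g_{v+w}\le g_v+g_w$, $g_{cv}=|c|g_v$ recorded above, together with its integral form: if $w_\varepsilon=\int_{-\varepsilon}^{\varepsilon}\zeta_\varepsilon(s)\,w_s\,d s$ is a Bochner integral in $N^{1,p}(\Omega')$, then $g_{w_\varepsilon}\le\int_{-\varepsilon}^{\varepsilon}\zeta_\varepsilon(s)\,g_{w_s}\,d s$ $\mu$-a.e. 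The latter follows from the discrete sublinearity by approximating the integral with Riemann sums and passing to the limit via Fuglede's lemma and lower semicontinuity of the minimal $p$-weak upper gradient.

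For part (a), fix $\Omega'\subset\subset\Omega$ and $[t_1,t_2]\subset\subset(0,T)$, and choose $\delta_0>0$ with $[t_1-\delta_0,t_2+\delta_0]\subset\subset(0,T)$. By continuity of translations in $L^p(t_1-\delta_0,t_2+\delta_0;N^{1,p}(\Omega'))$ and the trivial estimate $\|g_v\|_{L^p(\Omega')}^p\le\|v\|_{N^{1,p}(\Omega')}^p$ applied to $v=u(\cdot,t-s)-u(\cdot,t)$, I obtain, for $|s|<\delta_0$,
\[
\int_{t_1}^{t_2}\!\int_{\Omega'} g_{u(\cdot,t-s)-u(\cdot,t)}^{\,p}\,d\mu\,d t\ \le\ \int_{t_1}^{t_2}\!\bigl\|u(\cdot,t-s)-u(\cdot,t)\bigr\|_{N^{1,p}(\Omega')}^{\,p}\,d t\ \longrightarrow\ 0
\]
as $s\to0$; joint measurability of $(x,t)\mapsto g_{u(\cdot,t-s)-u(\cdot,t)}(x)$ is inherited from the Bochner measurability of $t\mapsto u(\cdot,t-s)-u(\cdot,t)$. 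As $\Omega'$ and $[t_1,t_2]$ are arbitrary, this is the asserted convergence in $L^p_{\mathrm{loc}}(\Omega_T)$.

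For part (b), writing for a.e.\ $t$ the difference $u_\varepsilon(\cdot,t)-u(\cdot,t)=\int_{-\varepsilon}^{\varepsilon}\zeta_\varepsilon(s)\bigl(u(\cdot,t-s)-u(\cdot,t)\bigr)\,d s$ as a Bochner integral in $N^{1,p}(\Omega')$ and using the integral sublinearity, I get
\[
g_{u_\varepsilon(\cdot,t)-u(\cdot,t)}(x)\ \le\ \int_{-\varepsilon}^{\varepsilon}\zeta_\varepsilon(s)\,g_{u(\cdot,t-s)-u(\cdot,t)}(x)\,d s\qquad\text{for }\mu\text{-a.e. }x .
\]
Since $\int_{-\varepsilon}^{\varepsilon}\zeta_\varepsilon=1$, Minkowski's integral inequality (in $x$ and $t$, with $s$ the integrated-out variable) turns this into
\[
\bigl\|g_{u_\varepsilon-u}\bigr\|_{L^p(\Omega'\times(t_1,t_2))}\ \le\ \sup_{|s|\le\varepsilon}\ \bigl\|g_{u(\cdot,\cdot-s)-u(\cdot,\cdot)}\bigr\|_{L^p(\Omega'\times(t_1,t_2))}\ \longrightarrow\ 0
\]
as $\varepsilon\to0$ by part (a), which is the $L^p_{\mathrm{loc}}(\Omega_T)$ convergence. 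For the pointwise $\nu$-a.e.\ statement, the quick route is: along any sequence $\varepsilon_k\to0$ pass to a subsequence along which $g_{u_{\varepsilon_k}-u}\to0$ $\nu$-a.e., which is in any case what is needed when the lemma is applied to pass to limits in integral estimates. To obtain convergence of the full family I would invoke Cheeger's measurable differentiable structure, available here from the doubling property and the weak Poincar\'e inequality: in each chart there is a bounded linear differential $D$ with $g_f\simeq|Df|$ $\mu$-a.e.; since bounded linear maps commute with Bochner integrals, $Du_\varepsilon=(Du)_\varepsilon$ $\nu$-a.e., and Lebesgue's differentiation theorem applied to $t\mapsto Du(x,t)$ (which lies in $L^1_{\mathrm{loc}}$ for $\mu$-a.e.\ $x$) gives $(Du)_\varepsilon(x,t)\to Du(x,t)$ for $\nu$-a.e.\ $(x,t)$; hence $g_{u_\varepsilon(\cdot,t)-u(\cdot,t)}(x)\lesssim|Du_\varepsilon(x,t)-Du(x,t)|\to0$ $\nu$-a.e.\ in $\Omega_T$.

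The main obstacle I expect is the functional-analytic groundwork behind (a): verifying that $t\mapsto u(\cdot,t)$ is genuinely Bochner measurable and locally $L^p$ as an $N^{1,p}(\Omega')$-valued map, so that continuity of translations and the Lebesgue point theorem legitimately apply, and that the minimal upper gradients appearing are jointly measurable in $(x,t)$. Once this is secured, (a) is immediate and the $L^p_{\mathrm{loc}}$ part of (b) is a formal consequence via Minkowski's inequality; only the full-family pointwise statement in (b) requires the extra ingredient of a differentiable structure, and if a subsequence suffices it can be dispensed with.
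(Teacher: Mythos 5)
The paper gives no argument for this lemma at all: its ``proof'' is the citation of Lemma 6.8 in \cite{MassSilj11}, so there is nothing internal to compare you against line by line. Judged on its own, your argument is essentially correct and follows the strategy that is standard behind the cited result: view $t\mapsto u(\cdot,t)$ as a Bochner map into $N^{1,p}(\Omega')$, obtain (a) from continuity of translations in $L^p(t_1,t_2;N^{1,p}(\Omega'))$, obtain the $L^p_{\mathrm{loc}}$ part of (b) from the integrated sublinearity $g_{\int\zeta_\varepsilon(s)w_s\,ds}\le\int\zeta_\varepsilon(s)g_{w_s}\,ds$ (simple-function or Riemann-sum approximation plus Fuglede/Mazur and lower semicontinuity, as you indicate) together with Minkowski's inequality, and obtain the pointwise statement through the Cheeger differential, using that $|Df|$ is comparable to $g_f$, that $D$ is linear and commutes with the Bochner integral so $Du_\varepsilon=(Du)_\varepsilon$, and one-dimensional Lebesgue differentiation in $t$. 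Two points carry the real weight and deserve to be made explicit rather than deferred. First, the measurability groundwork you flag is exactly where the content lies: the paper's definition of $L^p(0,T;N^{1,p})$ gives only Borel measurability, and you need separability of $N^{1,p}(\Omega')$ (available under the standing hypotheses of completeness, doubling and a Poincar\'e inequality) to upgrade this to strong measurability before translation continuity, the Bochner representation of $u_\varepsilon(\cdot,t)-u(\cdot,t)$, and a jointly measurable version of $(x,t)\mapsto g_{u(\cdot,t)}(x)$ (via $\|g_v-g_w\|_{L^p}\le\|g_{v-w}\|_{L^p}\le\|v-w\|_{N^{1,p}}$) can legitimately be used. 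Second, the subsequence shortcut you mention would prove strictly less than the lemma asserts, so the Cheeger step is not optional; note also that the two-sided comparison $g_f\simeq|Df|$ for minimal $p$-weak upper gradients requires a Poincar\'e inequality adapted to the exponent, which under the standing weak $(1,2)$-Poincar\'e assumption is available for $p=2$ (the only case the paper uses) and for exponents above the Keith--Zhong improved one, though not literally for every $1<p<\infty$ in the statement. With those caveats spelled out, your proof is a faithful, self-contained reconstruction of the cited lemma.
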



\subsection{Parabolic quasiminimizers} \label{section quasiminimizers}
\begin{definition}
Let $\Omega$ be an  open subset of $X$, $u:
\Omega\times (0,T)\to \R$ and $K'\geq 1$. A function $u$
belonging to the parabolic space
$L^2_{\trm{loc}}(0,T;N_{\trm{loc}}^{1,2}(\Omega))$ is a
\emph{parabolic quasiminimizer} if
\begin{equation*}
\begin{split}
\int_{\{\phi\neq 0\}}u\parts{\phi}{t} \,d \nu + \int_{\{ \phi\neq 0\}} E(u) \,d \nu\leq K' \int_{\{ \phi\neq 0\}} E(u+\phi)\,d \nu,
\end{split}
\end{equation*}
for every $\phi \in \textrm{Lip}(\Omega_T)$  such that $\{\phi\neq 0\}\subset \subset \Omega_T$, where
we denote
$E(u)=F(x,t,g)$ and $F: \Omega \times (0,T) \times \R \to \R $ satisfies the following assumptions:
\begin{enumerate}
\item $ (x,t)\mapsto F(x,t,\xi)$ is
  measurable for every $\xi$,
\item $\xi \mapsto F(x,t,\xi)$ is continuous for almost every $(x,t)$,
\item  there exist $0<c_1\leq c_2<\infty$ such that for every
  $\xi$ and almost every $(x,t)$, we have
\begin{equation*}
c_1 \abs{\xi}^2 \leq F(x,t,\xi)\leq c_2
\abs{\xi}^2.\end{equation*}
\end{enumerate}
\end{definition}
As a consequence of the above, a parabolic quasiminimizer $u$ satisfies
\begin{align}\label{qmin}
\alpha \int_{\{\phi\neq 0\}} u \frac{\partial \phi}{\partial t}\,d\nu+\int_{\{\phi\neq 0\}} g_u^2\,d\nu \leq K \int_{\{\phi\neq 0\}} g_{u+\phi}^2\,d\nu,
\end{align}
with $K=c_2c_1^{-1}K'\geq 1$ and $\alpha=c_1^{-1}$, for every $\phi \in \textrm{Lip}(\Omega_T)$  such that $\{\phi\neq 0\}\subset \subset \Omega$. There is a subtle difficulty in proving Caccioppoli
  type estimates
in the parabolic case: one often needs a test function
depending on $u$ itself, but $u$ is a priori not necessarily in $\Lip(\Om_T)$ nor has compact support. We treat this difficulty
in the following manner. Consider a test function $\phi \in \Lip(\Omega_T)$ with compact support. By a change of variable in \eqref{qmin}, we see that there exists a constant $\varepsilon>0$ such that for every $-\varepsilon<s<\varepsilon$,
\begin{align*}
\alpha\int_{\{\phi\neq 0\}} u(x,t-s) \frac{\partial \phi}{\partial t}\,d\nu+\int_{\{\phi\neq 0\}} g_{u(x,t-s)}^2\,d\nu 
\leq K \int_{\{\phi\neq 0\}} g_{u(x,t-s)+\phi}^2\,d\nu.
\end{align*}
Let now $\zeta_{\eps}(s)$ be a standard
mollifier whose support is contained in $(-\eps,\eps)$. We multiply the above inequality with $\zeta_{\eps}(s)$ and integrate on both sides with respect to $s$ use Fubini's theorem to change the order of integration, and lastly use partial integration for the first term on the left hand side, to obtain
\begin{equation}\label{quasiminimizer inequality}
\begin{split}
-\alpha\int_{\{\phi\neq 0\}} \frac{\partial u_\varepsilon}{\partial t} \phi \,d\nu+\int_{\{\phi\neq 0\}}\left(g_{u}^2\right)_\eps \,d\nu 
\leq K \int_{\{\phi\neq 0\}} \left( g_{u(x,t-s)+\phi}^2\right)_\eps\,d\nu,
\end{split}
\end{equation}
for every compactly supported $\phi\in \Lip(\Omega_T)$. By Lemma 2.3 in \cite{MassMiraParoParv12} we know the following density result: for every $\phi \in L^2(0,T;N^{1,2}(\Omega))$ and $\varepsilon>0$ there exists a function $\varphi \in \Lip(\Omega_T)$ such that $\{\varphi\neq 0\}\subset \subset \Omega_T$ and
\begin{align*}
&\|\phi-\varphi\|_{L^2(0,T;N^{1,2}(\Omega))}<\varepsilon \textrm{ and }\quad \nu(\{\varphi\neq 0\}\setminus \{\phi \neq 0\})<\varepsilon.
\end{align*}
From this it follows, see the proof of Lemma 2.7 in \cite{MassMiraParoParv12}, that if $u\in L_{\textrm{loc}}^2(0,T; N^{1,2}(\Omega))$ is a $K$-quasiminimizer, then \eqref{quasiminimizer inequality} holds for every $\phi\in L_\textrm{c}^{2}(0,T;N_0^{1,2}(\Omega))$. 

 \subsection{Standing assumptions} \label{standing assumptions} We assume that the domain $\Omega$ is regular in the sense that $X\setminus \Omega$ is uniformly $2$-thick. For the definition of thickness see below. Let $\eta:[0,T)\times \Omega\mapsto \R$ be such that $\eta \in W^{1,2}(0,T; N^{1,2}(\Omega))$, $\eta(x,0)\in N^{1,2}(\Omega)$ and
\begin{align*}
\frac{1}{h}\int_0^h \int_\Omega |\eta(x,t)-
 \eta(x,0)|^2\,d\mu \,dt\rightarrow \,0 \textrm{ as }h\rightarrow 0.
\end{align*}
From now on in this paper, we assume that $u\in L^2_{\trm{loc}}(0,T;\N(\Omega))$ is a parabolic quasiminimizer in $\Omega_T$, and satisfies a parabolic boundary condition with $\eta$, in the sense that
 \begin{align}
 \label{lateral boundary condition} & u(\cdot, t)-\eta(\cdot,t)\in N_0^{1,2}(\Omega),\textrm{ for a.e. }t\in(0,T),\\
\label{initial condition 2}& \frac{1}{h} \int_0^h\int_{\Omega}|u(x,t)-\eta(x,t)|^2 \,d\mu\,dt \,\rightarrow 0, \textrm{ as }h\,\rightarrow\,0.
 \end{align}
 
\section{Estimates away from the lateral boundary}\label{away from the lateral boundary}

Establishing higher integrability for a function is based on obtaining a reverse Hölder inequality for the function, and then using it together with a Caldéron--Zygmund type decomposition and a Vitali covering to obtain integrability at some slightly higher exponent. The starting point for showing the reverse Hölder inequality for a parabolic quasiminimizer is an energy estimate over two concentric parabolic cylinders with different radii, $Q_\rho(z_0)$ and $Q_\sigma(z_0)$, where $\rho<\sigma$. This energy estimate is extracted from the definition of parabolic quasiminimizers by choosing a suitable test function.

%

When choosing the test function, we are faced with two qualitatively different situations. Depending on the center point and radii of the concentric cylinders, the  larger cylinder $Q_\sigma(z_0)=B_\sigma(x_0)\times \Lambda_\sigma(t_0)$, may or may not overlap the lateral boundary of $\Omega_T$. These two alternatives cause a difference in how we build the test function, and consequently lead to different energy estimates.

In case we assume $B_\sigma(x_0)$ is a subset of $\Omega$, and so $Q_\sigma(z_0)$ does not overlap the lateral boundary of $\Omega_T$, we can construct the test function by using only the geometry of the cylinders $Q_\rho(x_0)$ and $Q_\sigma(x_0)$, the quasiminimizer $u$ itself, and the given initial condition, without having to take into consideration the lateral boundary of $\Omega_T$. 


We begin by treating this case. In order to establish the reverse Hölder inequality, it turns out  that we only need the energy estimate for $\rho<\sigma\leq 2\rho$. Therefore the discussion in this section covers those cylinders $Q_\rho(x_0)$ for which we have $B_{2\rho}(x_0)\subset \Omega$. The complementary case to this covers the cylinders $Q_\rho(x_0)$ such that $B_{2\rho}(x_0)\setminus \Omega\neq \emptyset$, and is the topic of Section \ref{Estimates near the lateral boundary}.

\begin{lemma}[Energy estimate]
\label{energy estimate near the initial boundary}
There exists a positive constant $
c=c(K)$, such that for every $Q_{\rho}=B_\rho(x_0) \times \Lambda_\rho(t_0)$, $\rho<\sigma$ where $B_\sigma \subset \Omega$, we have
\[
\begin{split}
 &\esssup_{t \in
  \Lambda_{\rho}\cap (0,T)} \int_{B_{\rho}} |u-u_{\sigma}(t)|^2 \,d \mu +\int_{Q_{\rho}\cap \Omega_T}  g_u^2 \,d \nu \\
&\leq
c \int_{(\q{\sigma}\setminus \q{\rho})\cap \Omega_T} g_u^2 \,d \nu
+\frac{c}{(\sigma-\rho)^2}\frac{\mu(B_\sigma)}{\mu(B_\rho)}\int_{\q{\sigma}\cap \Omega_T}|u-u_{\sigma}(t)|^2
\,d \nu\\
&\qquad+c\frac{\mu(B_\sigma)}{\mu(B_\rho)}\int_{B_\sigma}|\eta(x,0)-\eta_\sigma(0)|^2\,d\mu
\end{split}
\]

\begin{proof}
Assume $Q_\rho=B_\rho(x_0)\times \Lambda_\rho(t_0)$, and $\rho<\sigma$ are such that $B_\sigma(x_0) \subset \Omega$ $\Lambda_\rho \cap (0,T) \neq \emptyset$. Assume $t'\in \Lambda_\rho \cap (0,T)$. Define
 \begin{align*}
 &\chi_h(t) = 
 \begin{cases}
 \frac{t-h}{h}, &h\leq t \leq 2h, \\
 1,& 2h \leq t \leq t'-2h,\\
 \frac{t'-h-t}{h},&t'-2h\leq t \leq t'-h,\\
 0,&\textrm{otherwise}.
\end{cases}
 \end{align*}
Let $\varphi\in N^{1,2}(B_\sigma)$, $0\leq \varphi\leq 1$, be such that  $\varphi=1$ in $B_\rho$, the support of $\varphi$ is a compact subset of  $B_\sigma$, and 
\begin{align*}
g_\varphi^2\leq \frac{c}{(\sigma-\rho)^2}. 
\end{align*}
For a function $f(x,t)$, denote
\begin{align}\label{weighted average}
f_{\sigma}^\varphi(t)=\frac{\int_{B_\sigma} f(x,t) \varphi(x)\, d\mu}{\int_{B_\sigma} \varphi(x) \, d\mu}.
\end{align}
Set now $\phi=-\varphi(u_{\eps}-(u_\varepsilon)_\sigma^\varphi)\chi_{h}$. Since $u\in L^2_{\trm{loc}}(0,T;\N(\Omega))$ is a parabolic quasiminimizer in $\Omega_T$ and $\phi \in \Lip_c(0,T;N^{1,2}(\Omega))$, by the discussion in Section \ref{section quasiminimizers} we can insert $\phi$ into inequality \eqref{quasiminimizer inequality} and examine the resulting terms. In the first term on the left hand side, we add and subtract
$(u_\eps)_{\sigma}^\varphi$ to obtain after integrating by parts
\begin{equation}\label{valivaihe}
\begin{split}
&-\int_{\Om_T} \frac{\partial u_{\eps}}{\partial t}  \phi\,d \nu=\int_{\Om_T}
(u_{\eps}-(u_\varepsilon)_\sigma^\varphi(t))\parts{\phi}{t}\,d \nu-\int_{\Om_T}\frac{\partial (u_\varepsilon)_\sigma^\varphi(t)}{\partial t}\phi \,d \nu.
\end{split}
\end{equation}
Using the definition of
$(u_\varepsilon)_\sigma^\varphi(t)$, we
  see that the last term on the right hand side vanishes
\[
 \begin{split}
&\int_{\Om_T}\parts{(u_\varepsilon)_\sigma^\varphi(t) }{t}\phi \,d \nu\\
&=
 -\int_{0}^{t'} \parts{ (u^{\varphi}_{\sigma,\eps}(t))}{t} \l(\int_{B_{\sigma}}u_{\eps} \varphi \,d \mu - \frac{\int_{B_{\sigma}}
 \varphi\,d \mu \int_{B_{\sigma}} u_{\eps} \varphi \,d
 \mu}{\int_{B_{\sigma}}\varphi \,d \mu}\r)\chi_{h}(t) \,d t
=0.
 \end{split}
\]
Obtaining this vanishing property is one of the two reasons for defining the weighted average \eqref{weighted average}. The other reason is that the integral average of the function $|u-u_ \sigma^\varphi|^2$ over $B_\sigma$ is comparable to the integral average of $|u-u_\sigma|^2$ over $B_\sigma$, as will be seen at the end of the proof. We write out the first term on the right hand side of \eqref{valivaihe},
and have
\[
\begin{split}
-\int_{\Om_T} \parts{u_{\eps}}{t}  \phi \,d \nu&=-\int_{\Om_T}
(u_{\eps}-(u_\varepsilon)_\sigma^\varphi(t))^2\parts{}{t}\left(\varphi\chi_{h}\right) \,d \nu
\\
&\hspace{1 em}-\frac{1}{2}\int_{\Om_T}
\parts{}{t}\left((u_{\eps}-(u_\varepsilon)_\sigma^\varphi(t))^2\right)(\varphi\chi_{h}) \,d \nu\\
&=-\frac{1}{2}\int_{\Om_T}
(u_{\eps}-(u_\varepsilon)_\sigma^\varphi(t))^2\parts{}{t}\left(\varphi\chi_{h}\right) \,d \nu,
\end{split}
\]
and so, taking into account the definition of $\chi_h$, we arrive at
\begin{equation}\label{aikakE}
\begin{split}
 -\int_{\Om_T} &\parts{u_{\eps}}{t} \phi \,d \nu=-\frac{1}{2h} \int_h^{2h}
\int_{B_{\sigma}}\abs{u_\eps(x,t)-(u_\eps)_{\sigma}^{\varphi}(t)}^2
\varphi(x) \,d \mu\\
&\qquad+\frac{1}{2h} \int_{t'-2h}^{t'-h}
\int_{B_{\sigma}}\abs{u_\eps(x,t)-(u_\eps)_{\sigma}^{\varphi}(t)}^2
\varphi(x) \,d \mu.
\end{split}
\end{equation}
By the definition \eqref{weighted average}, we have for every $\varepsilon<h$
\begin{align*}
\int_{\{\phi\neq 0\}}(u_\sigma^\varphi-(u_\eps)_\sigma^\varphi)^2\, d\nu\leq \mu(\Omega)\int_{h-\delta}^{T-h+\delta}(u_\delta^\varphi(t)-(u_\sigma^\varphi)_\eps(t))^2\,dt\\
\leq \mu(\Omega)\int_{-\varepsilon}^{\varepsilon} \int_{h-\delta}^{T-h+\delta}|u_\sigma^\varphi(t)-u_\sigma^\varphi(t-s)|^2\,dt \, \zeta_{\eps}(s) \,ds,
\end{align*}
and therefore the fact that $u_{\sigma}^\varphi\in L_\textrm{loc}^2(0,T)$ implies that the above expression tends to zero as $\eps \rightarrow 0$.
Hence, using the triangle inequality and the initial condition \eqref{initial condition 2} as first $\eps\to 0$ and then $h\to 0$ leads us to
\begin{align*}
\lim_{\varepsilon,h\rightarrow 0} -\int_{\Om_T} \parts{u_{\eps}}{t} \phi \,d \nu=&-\frac{1}{2}
\int_{B_{\sigma}}\abs{\eta(x,0)-\eta_{\sigma}^{\varphi}(0)}^2
\varphi(x) \,d \mu\\
&+\frac{1}{2} 
\int_{B_{\sigma}}\abs{u(x,t')-u_{\sigma}^{\varphi}(t')}^2
\varphi(x) \,d \mu.
\end{align*}
On the right hand side of inequality \eqref{quasiminimizer inequality}, we note that for every $h,\eps$, in the set $\{\phi\neq0\}$ we have
\begin{align*}
&(g_{u(\cdot,\cdot-s)-\varphi(u_\eps-(u_\eps)_\sigma^\varphi)\chi_{h}}^2)_\eps\leq c (g^2_{u(\cdot,\cdot-s)-u})_\eps+ cg^2_{u-\varphi(u-u_\sigma^\varphi)}\\&\qquad+cg^2_{\varphi(u-u_\sigma^\varphi)}(1-\chi_{h})^2
+cg_\varphi^2((u_\eps)_\sigma^\varphi-u_\sigma^\varphi)^2\chi_{h}^2\\&\qquad+c(u-u_\eps)^2 g_{\varphi}^2 \chi_{h}^2+c\varphi^2 g_{u-u_\eps}^2 \chi_{h}^2.
\end{align*}
By Lemma \ref{convergence of upper gradient} we know that $g_{u-u_\eps}^2\rightarrow 0$  and $(g_{u(\cdot,\cdot-s)-u}^2)_\eps\rightarrow 0$ in $L_{\textrm{loc}}^1(\Omega_T)$ as $\eps\rightarrow 0$. Hence, we obtain
\begin{align*}
\limsup_{\varepsilon,  h\rightarrow 0}\int_{\{\phi \neq 0\}} \left(g_{u(\cdot,\cdot-s)+\phi}^2 \right)_\varepsilon\,d\nu \leq c\int_{Q_\sigma \cap \Omega_T}g_{u-\varphi(u-u_\sigma^\varphi)}^2 \, d\mu\, dt.
\end{align*}
Now we note that since $u_\sigma^\varphi$ does not depend on $x$ and hence its upper gradient vanishes, we have
\begin{align*}
\int_{Q_\sigma \cap \Omega_T}g_{u-\varphi(u-u_\sigma^\varphi)}^2 \, d\nu \leq \int_{Q_\sigma \cap \Omega_T}|1-\varphi|^2g_{u}^2 \, d\nu +\int_{Q_\sigma \cap \Omega_T}|u-u_\sigma^\varphi(t)|^2g_{\varphi}^2 \, d\nu.
\end{align*}
Combining the obtained expressions leads us to the estimate
\[
\begin{split}
 &\esssup_{t \in
  \Lambda_{\rho}\cap (0,T)} \int_{B_{\rho}} |u-u_{\sigma}^{\varphi}(t)|^2 \,d \mu +\int_{Q_{\rho}\cap \Omega_T}  g_u^2 \,d \nu \\
&\quad\leq
c \int_{(\q{\sigma}\setminus \q{\rho})\cap \Omega_T} g_u^2 \,d \nu
+\frac{c}{(\sigma-\rho)^2}\int_{\q{\sigma}\cap \Omega_T}|u-u_{\sigma}^\varphi(t)|^2
\,d \nu\\
&\qquad+c
\int_{B_{\sigma}}\abs{\eta(x,0)-\eta_{\sigma}^{\varphi}(0)}^2
\,d \mu,
\end{split}
\]
where $c=c(K)$. We complete the  proof by noting that for any $t\in (0,T)$, we have
\begin{align*}
\int_{B_\rho}&|u-u_{\sigma}(t)|^2\,d\mu\\
 &\leq 2\int_{B_\rho}|u-u_{\sigma}^\varphi(t)|^2\,d\mu+2\int_{B_\rho}\left(\vint_{B_{\sigma}}|u_{\sigma}^\varphi(t)-u|^2\,d\mu \right)\,d\mu\\
 &\leq  4\int_{B_\rho}|u-u_{\sigma}^\varphi(t)|^2\,d\mu.
 \end{align*}
On the other hand, by the triangle inequality and by Jensen's inequality, and since $\varphi=1$ in $B_\rho$,
 \begin{align*}
 \int_{B_{\sigma}}&|u-u_{\sigma}^\varphi(t)|^2 \, d\mu
\leq 2\int_{B_{\sigma}}|u-u_{\sigma}(t)|^2 \, d\mu\\&+2\int_{B_{\sigma}}\left( \left(\int_{B_{\sigma}}\varphi\,d\mu\right)^{-1}\int_{B_{\sigma}}|u_{\sigma}(t)-u|^2\varphi \,d\mu\right) \, d\mu\\
 &\leq 4\frac{\mu(B_{\sigma})}{\mu(B_\rho)} \int_{B_{\sigma}}|u-u_{\sigma}(t)|^2 \, d\mu.
 \end{align*}
 The analogous applies for the functions $\eta(x,0)$, $\eta_\sigma(0)$ and $\eta_\sigma^\varphi(0)$.
\end{proof}
\end{lemma}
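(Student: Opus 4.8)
The plan is to extract the energy estimate directly from the mollified quasiminimizer inequality \eqref{quasiminimizer inequality} by inserting the test function $\phi = -\varphi(u_\eps - (u_\eps)_\sigma^\varphi)\chi_h$, where $\varphi$ is a cutoff in space supported in $B_\sigma$ with $\varphi \equiv 1$ on $B_\rho$ and $g_\varphi^2 \le c(\sigma-\rho)^{-2}$, and $\chi_h$ is a Lipschitz cutoff in time rising from $0$ to $1$ on $[h,2h]$ and falling back to $0$ on $[t'-2h,t'-h]$ for a fixed $t' \in \Lambda_\rho \cap (0,T)$. The weighted spatial average $(u_\eps)_\sigma^\varphi(t)$ defined in \eqref{weighted average} is chosen so that (i) the contribution of $\partial_t (u_\eps)_\sigma^\varphi$ against $\phi$ vanishes, since $\phi$ integrates to zero in $x$ against the relevant weight, and (ii) the averaged quantity $\vint_{B_\sigma}|u-u_\sigma^\varphi|^2$ is comparable (up to $\mu(B_\sigma)/\mu(B_\rho)$) to $\vint_{B_\sigma}|u-u_\sigma|^2$; I would flag both of these at the point they are used.

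The main body of the argument would proceed term by term. For the parabolic term $-\int \partial_t u_\eps \, \phi \, d\nu$, I would add and subtract $(u_\eps)_\sigma^\varphi(t)$, integrate by parts, kill the $(u_\eps)_\sigma^\varphi$-derivative term by the vanishing property, and rewrite the remainder as $-\tfrac12\int (u_\eps - (u_\eps)_\sigma^\varphi)^2 \,\partial_t(\varphi\chi_h)\,d\nu$ using the product rule (the $\partial_t$ acting on the square integrates by parts against itself). Since $\varphi$ is time-independent, only $\partial_t\chi_h$ survives, producing a difference of two spatial integrals: one over $[h,2h]$ that, after $\eps \to 0$ and then $h \to 0$, converges by the initial condition \eqref{initial condition 2} to $-\tfrac12\int_{B_\sigma}|\eta(x,0)-\eta_\sigma^\varphi(0)|^2\varphi\,d\mu$, and one over $[t'-2h,t'-h]$ converging to $+\tfrac12\int_{B_\sigma}|u(x,t')-u_\sigma^\varphi(t')|^2\varphi\,d\mu$. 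For the passage $\eps \to 0$ one needs that $(u_\eps)_\sigma^\varphi \to u_\sigma^\varphi$ in $L^2_{\mathrm{loc}}(0,T)$, which follows from $u_\sigma^\varphi \in L^2_{\mathrm{loc}}(0,T)$ and standard mollifier estimates. For the right-hand side of \eqref{quasiminimizer inequality}, I would expand $g^2_{u(\cdot,\cdot-s)+\phi}$ using the subadditivity and Leibniz-type bounds for minimal upper gradients, isolate the terms containing $g_{u-u_\eps}$ and $g_{u(\cdot,\cdot-s)-u}$, which vanish in $L^1_{\mathrm{loc}}$ as $\eps \to 0$ by Lemma \ref{convergence of upper gradient}, and also dispose of $((u_\eps)_\sigma^\varphi - u_\sigma^\varphi)^2 g_\varphi^2\chi_h^2$ and $(u-u_\eps)^2 g_\varphi^2\chi_h^2$ which vanish similarly; what remains after taking $\limsup_{\eps,h\to 0}$ is $c\int_{Q_\sigma\cap\Omega_T} g^2_{u-\varphi(u-u_\sigma^\varphi)}\,d\nu$, which I then split, using that $u_\sigma^\varphi$ is $x$-independent, into $\int|1-\varphi|^2 g_u^2 + \int|u-u_\sigma^\varphi|^2 g_\varphi^2$; on $Q_\rho$ the first integrand vanishes, giving the $(\q\sigma\setminus\q\rho)$-term and the $(\sigma-\rho)^{-2}$-term. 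The left-hand side of \eqref{quasiminimizer inequality} also contributes the good term $\int_{Q_\rho\cap\Omega_T} g_u^2\,d\nu$ (using $\varphi\equiv1$, $\chi_h\to1$ on $\Lambda_\rho$), and taking $\esssup$ over $t' \in \Lambda_\rho\cap(0,T)$ of the surviving positive boundary term yields the $\esssup$ on the left. Finally I would convert the weighted averages back to ordinary ones via the two elementary comparisons at the end of the excerpt (triangle inequality plus Jensen, picking up the factor $\mu(B_\sigma)/\mu(B_\rho)$), and likewise for $\eta$.

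The main obstacle I anticipate is the justification of the limits $\eps \to 0$ and $h \to 0$ in the parabolic term, carried out in the correct order: one must first send $\eps \to 0$ with $h$ fixed, using $L^2_{\mathrm{loc}}(0,T)$-convergence of the mollified weighted averages and the $L^1_{\mathrm{loc}}$-convergence of the upper gradient terms from Lemma \ref{convergence of upper gradient}, and only then send $h \to 0$, invoking the initial condition \eqref{initial condition 2} (and the Lebesgue point structure in $t'$) to identify the boundary terms. Keeping track of which cutoff is being relaxed at each stage, and ensuring the $\esssup_{t'}$ can be pulled out at the end without losing the constant's dependence only on $K$, is the delicate bookkeeping; the rest is a fairly standard Caccioppoli computation adapted to the upper-gradient calculus, where the only genuinely metric-space input is the systematic use of the subadditivity and product estimates for $g$ together with the doubling-controlled comparison of weighted and unweighted averages.
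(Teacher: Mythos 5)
Your proposal follows essentially the same route as the paper's own proof: the same test function $\phi=-\varphi(u_\eps-(u_\eps)_\sigma^\varphi)\chi_h$ with the weighted average chosen precisely for the vanishing property and for comparability with the ordinary average, the same integration by parts and order of limits ($\eps\to 0$ with $h$ fixed, then $h\to 0$ via the initial condition), the same treatment of the right-hand side through subadditivity of upper gradients and Lemma~\ref{convergence of upper gradient}, and the same final passage from $u_\sigma^\varphi$ to $u_\sigma$ picking up the factor $\mu(B_\sigma)/\mu(B_\rho)$. The plan is correct and complete in its essentials, and the points you flag as delicate (the limit bookkeeping and the $L^2_{\mathrm{loc}}$-convergence of the mollified weighted averages) are exactly the ones the paper verifies explicitly.
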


Having established the fundamental energy estimate, we derive a Caccioppoli inequality  by using the hole filling
iteration. For the iteration, it is essential that we can write the above energy estimate for every $\sigma\in(\rho,2\rho)$.
\begin{lemma}[Caccioppoli]
\label{caccioppoli near the initial boundary}
There exists a positive constant $c=c(c_\mu,K)$ so that for any $Q_\rho=B_\rho(x_0)\times \Lambda_\rho(t_0)$, such that $B_{2\rho}(x_0)\subset \Omega$, we have
\[
\label{eq:cacckvasi}
\begin{split}
\int_{Q_{\rho}\cap\Omega_T} g_u^2 \, d \nu &\leq
\frac{c}{\rho^2}\int_{\q{2\rho}\cap \Omega_T}|u-u_{2\rho}(t)|^2  \, d \nu+c\int_{B_{2\rho}}|\eta(x,0)-\eta_{2\rho}(0)|^2\,d\mu.
\end{split}
\]

\begin{proof} By Lemma \ref{energy estimate near the initial boundary}, for any cylinder $Q_\rho=B_\rho(x_0)\times \Lambda_\rho(t_0)$ such that $B_{2\rho}(x_0)\subset \Omega$, we have for any $\rho<\sigma\leq 2\rho$,
\[
\begin{split}
 &\esssup_{t \in
  \Lambda_{\rho}\cap (0,T)} \int_{B_{\rho}} |u-u_{\sigma}(t)|^2 \,d \mu +\int_{Q_{\rho}\cap \Omega_T}  g_u^2 \,d \nu \\
&\leq
c \int_{(\q{\sigma}\setminus \q{\rho})\cap \Omega_T} g_u^2 \,d \nu
+\frac{c}{(\sigma-\rho)^2}\frac{\mu(B_{\sigma})}{\mu(B_\rho)}\int_{\q{\sigma}\cap \Omega_T}|u-u_{\sigma}(t)|^2
\,d \nu\\
&\qquad+c\frac{\mu(B_\sigma)}{\mu(B_\rho)}\int_{B_\sigma}|\eta(x,0)-\eta_\sigma(x,0)|^2\,d\mu,
\end{split}
\]
where $c=c(K)$. We add $c\int_{Q_\rho} g_u^2\,d\nu$ to both sides of the expression, and divide by $1+c$, to obtain
 \begin{align*}
 &\int_{Q_\rho} g_u^2 \, d\nu\leq  \frac{c}{1+c} \int_{Q_{\sigma}} g_u^2 \,d\nu+\frac{c}{(1+c)(\sigma-\rho)^2}\frac{\mu(B_{\sigma})}{\mu(B_\rho)}\int_{Q_{\sigma}}|u-u_{\sigma}(t)|^2
\, d \nu\\
&\qquad+\frac{c}{(1+c)}\frac{\mu(B_{\sigma})}{\mu(B_\rho)}\int_{B_{\sigma}}|\eta(x,0)-\eta_{\sigma}(x,0)|^2\,d\mu.
 \end{align*}
Then we choose 
 \begin{align*}
 \rho_0=\rho,\quad \rho_i-\rho_{i-1}=\frac{1-\beta}{\beta}\beta^i \rho, \quad i=1,2,\dots,k,\quad \beta^2=\frac{1}{2}\left(\frac{c}{1+c}+1\right),
 \end{align*}
 replace $\rho$ by $\rho_{i-1}$ and $\sigma$ by $\rho_i$, and iterate, to have
 \begin{align*}
 \int_{Q_\rho} g_u^2 \, d\nu&\leq  \left(\frac{c}{1+c}\right)^k \int_{Q_{\rho_k}} g_u^2 \,d\nu\\
 &+\sum_{i=1}^{k}\left(\frac{c}{1+c}\right)^i\frac{\mu(B_{\rho_i})}{\mu(B_{\rho_{i-1}})}\left(\frac{1}{(\rho_i-\rho_{i-1})^2}\int_{Q_{\rho_i}}|u-u_{\rho_i}(t)|^2
\, d \nu\right.\\
&\qquad\left.+\int_{B_{\rho_i}}|\eta(x,0)-\eta_{\rho_i}(0)|^2\,d\mu\right).
 \end{align*}
 Here among other things $\rho_i\leq 2\rho_{i-1}$ for every $i$, and so by the doubling property  of $\mu$, the ratio $\mu(B_{\rho_i})/\mu(B_{\rho_{i-1}})$ is uniformly bounded. Also, for each $i$ we can estimate  after using Fubini's theorem,
 \begin{align*}
 \int_{Q_{\rho_i}} &|u-u_{\rho_i}(t)|^2\,d\nu\leq 2 \int_{Q_{2\rho}} |u-u_{2\rho}(t)|^2\,d\nu\\
& + 2 \int_{Q_{2\rho}} \vint_{B_{\rho_i}}|u_{2\rho}(t)-u|^2\,d\mu\,d\nu\leq 2c \int_{Q_{2\rho}}|u-u_{2\rho}(t)|^2\,d\nu,
 \end{align*}
where $c=c(c_\mu)$, and analogously for $\eta, \eta_{\rho_i}$. Hence, taking the limit $k\rightarrow \infty$ yields the estimate,
 \begin{align*}
\int_{Q_\rho} g_u^2 \, d\nu\leq \frac{c}{\rho^2}\int_{Q_{2\rho}} |u-u_{2\rho}(t)|^2
\, d \nu+c\int_{B_{2\rho}}|\eta(x,0)-\eta_{2\rho}(0)|^2\,d\mu, 
 \end{align*} 
 where $c=c(c_\mu,K)$.
\end{proof}
\end{lemma}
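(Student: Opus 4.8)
The plan is to derive the Caccioppoli inequality from the energy estimate of \lemref{energy estimate near the initial boundary} via the classical hole-filling (or iteration) argument. The energy estimate, applied on the pair of cylinders $Q_\rho$ and $Q_\sigma$ with $\rho<\sigma\le 2\rho$ and $B_{2\rho}(x_0)\subset\Omega$, already contains on its right-hand side a term $c\int_{(Q_\sigma\setminus Q_\rho)\cap\Omega_T}g_u^2\,d\nu$ which is \emph{not} obviously absorbable because the integral of $g_u^2$ over $Q_\sigma$ is larger than the one over $Q_\rho$. The hole-filling trick is precisely to add $c\int_{Q_\rho}g_u^2\,d\nu$ to both sides, so that the first right-hand term becomes $c\int_{Q_\sigma}g_u^2\,d\nu$, and then divide by $1+c$: this produces an inequality of the form $\int_{Q_\rho}g_u^2\,d\nu\le\theta\int_{Q_\sigma}g_u^2\,d\nu+(\text{lower-order terms})$ with contraction factor $\theta=c/(1+c)<1$.

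Next I would set up a geometrically increasing sequence of radii $\rho_0=\rho<\rho_1<\dots<\rho_k\le 2\rho$. The natural choice is $\rho_i-\rho_{i-1}=\frac{1-\beta}{\beta}\beta^i\rho$ with $\beta$ chosen so that $\beta^2$ lies strictly between $\theta$ and $1$ (so that the series $\sum\theta^i(\rho_i-\rho_{i-1})^{-2}$ converges); explicitly $\beta^2=\tfrac12(\theta+1)$ works. Applying the contraction inequality with $\rho$ replaced by $\rho_{i-1}$ and $\sigma$ by $\rho_i$ and iterating from $i=1$ to $k$ gives $\int_{Q_\rho}g_u^2\,d\nu\le\theta^k\int_{Q_{\rho_k}}g_u^2\,d\nu+\sum_{i=1}^k\theta^i\frac{\mu(B_{\rho_i})}{\mu(B_{\rho_{i-1}})}\big((\rho_i-\rho_{i-1})^{-2}\int_{Q_{\rho_i}}|u-u_{\rho_i}(t)|^2\,d\nu+\int_{B_{\rho_i}}|\eta(x,0)-\eta_{\rho_i}(0)|^2\,d\mu\big)$. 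Since each $\rho_i\le 2\rho_{i-1}$, the doubling property bounds $\mu(B_{\rho_i})/\mu(B_{\rho_{i-1}})$ by a constant $c(c_\mu)$; and by Fubini together with $\rho_i\le 2\rho$ one replaces $u_{\rho_i}(t)$ by $u_{2\rho}(t)$ at the price of another $c(c_\mu)$ factor (this is the same comparison-of-averages computation already used at the end of the proof of \lemref{energy estimate near the initial boundary}), and similarly for the $\eta$-terms. Letting $k\to\infty$ kills the $\theta^k$ term because $\int_{Q_{\rho_k}}g_u^2\,d\nu$ stays bounded by $\int_{Q_{2\rho}\cap\Omega_T}g_u^2\,d\nu<\infty$, and the remaining geometric series sums to a finite constant, yielding the claimed estimate with $c=c(c_\mu,K)$.

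The main obstacle — though it is really bookkeeping rather than a conceptual difficulty — is keeping the measure ratios $\mu(B_\sigma)/\mu(B_\rho)$ and the factors $(\sigma-\rho)^{-2}$ under control uniformly through the iteration while ensuring the weighted series still converges; this is why $\beta$ must be chosen so that $\theta<\beta^2<1$ rather than merely $\theta<1$. One must also be slightly careful that the energy estimate from \lemref{energy estimate near the initial boundary} is valid for \emph{every} $\sigma\in(\rho,2\rho)$ (which it is, since its hypothesis is only $B_\sigma\subset\Omega$, guaranteed here by $B_{2\rho}\subset\Omega$), so that the iteration is legitimate at every step. No new analytic input beyond the energy estimate, the doubling property, and Fubini is needed.
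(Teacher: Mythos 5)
Your proposal is correct and follows essentially the same route as the paper's own proof: the same hole-filling step producing the contraction factor $c/(1+c)$, the same choice of radii $\rho_i$ with $\beta^2=\tfrac12\bigl(\tfrac{c}{1+c}+1\bigr)$, the same use of doubling and Fubini to replace $u_{\rho_i}(t)$ by $u_{2\rho}(t)$, and the same limit $k\to\infty$. Your explicit remark that $\beta$ must satisfy $\theta<\beta^2<1$ so that the weighted series converges is exactly the point the paper uses implicitly, so nothing is missing.
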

 
Next we prove a parabolic version of the Poincaré inequality. We use the fundamental energy estimate with $\sigma=2\rho$.
 \begin{lemma}[Parabolic Poincaré] \label{parabolic Poincaré near the initial boundary}
 There exists positive constants\\ $c=c(c_\mu,c_P,\lambda, K)$  and $1<q_0<2$ so that for any $Q_\rho=B_\rho(x_0)\times \Lambda_\rho(t_0)$, such that $B_{2\rho}(x_0)\subset \Omega$, we have
\[
\begin{split}
\esssup_{t \in
  \Lambda_{\rho}\cap (0,T)} \vint_{B_{\rho}} |u-u_{\rho}(t)|^2 \, d \mu
\leq
c &\frac{\rho^2}{\nu(Q_{2\lambda\rho})}\int_{\q{2\lambda\rho}\cap \Omega_T} g_u^2 \,d \nu\\&\qquad+ c\rho^2\left( \vint_{B_{2\lambda\rho}} g_\eta^q(x,0)\,d\mu \right)^\frac{2}{q},
\end{split}
\]
for any $q_0\leq q$.
\begin{proof}

 By Lemma \ref{energy estimate near the initial boundary}
 \[
\begin{split}
 &\esssup_{t \in
  \Lambda_{\rho}\cap (0,T)} \int_{B_{\rho}} |u-u_{2\rho}|^2 \,d \mu \leq
c \int_{\q{2\rho}\cap \Omega_T} g_u^2 \,d \nu\\
&\qquad+\frac{c}{\rho^2}\int_{\q{2\rho}\cap \Omega_T}|u-u_{2\rho}|^2
\,d \nu+c\int_{B_{2\rho}}|\eta(x,0)-\eta_{2\rho}(0)|^2\,d\mu.
\end{split}
\]
Since by assumption $B_{2\rho}\subset \Omega$, we can use the $(2,2)$-Poincaré  inequality for the second term on right hand side, and the $(2,q)$-Poincaré inequality, where $1<q<2$ is as in Remark \ref{poincare_remark}, for the third term on the right hand side. We obtain
\begin{align*}
&\esssup_{t \in
  \Lambda_{\rho}\cap (0,T)} \vint_{B_{\rho}} |u-u_{2\rho}|^2 \,d \mu
  \\
  &\qquad \leq c\int_{\Lambda_\rho \cap (0,T)}\vint_{B_{2\lambda\rho}}g_u^2\,d \mu\,dt+ c\rho^2\left( \vint_{B_{2\lambda\rho}} g_\eta^q(x,0)\,d\mu \right)^\frac{2}{q},
\end{align*}
where $c=c(c_\mu,c_P,K)$. The proof is completed by observing that
$Q_{2\rho}=4\rho^2\nu(B_{2\rho})$.
 \end{proof}
\end{lemma}
Caccioppoli's inequality together with the parabolic- and $(2,q)$-Poincaré inequality now provide us the required tools to establish a reverse Hölder's inequality.
\begin{lemma}[Reverse Hölder inequality]\label{reverse hölder away from boundary}
There exists a positive constant $c=c(c_\mu,c_P,\lambda, K)$, and a $1<q<2$, so that for any $Q_\rho=B_\rho(x_0)\times \Lambda_\rho(t_0)$, such that $B_{2\rho}(x_0)\subset \Omega$ , we have
 \begin{align*}
 &\frac{1}{\nu(Q_\rho)}\int_{Q_{\rho}\cap \Omega_T} g_u^2 \, d \nu\leq \varepsilon c\frac{1}{\nu(Q_{2\rho})}\int_{\q{2\lambda\rho}\cap \Omega_T} g_u^2 \,d \nu\\
 &\qquad+\varepsilon^{-1}c\left(\frac{1}{\nu(Q_{2\rho})}\int_{Q_{2\lambda \rho}\cap \Omega_T} g_u^q\,d\nu\right)^\frac{2}{q}+ \varepsilon c\left( \vint_{B_{2\lambda\rho}} g_\eta^q(x,0)\,d\mu \right)^\frac{2}{q}.
 \end{align*}
 \begin{proof}
 By the Caccioppoli Lemma \ref{caccioppoli near the initial boundary}, by the doubling property of $\mu$ and since $\nu(Q_{\rho})=\rho^2\mu(B_\rho)$, and then the $(2,q)$-Poincaré inequality for the second term on the right hand side, we obtain
 \begin{align*}
\frac{1}{\nu(Q_\rho)}&\int_{Q_{\rho}\cap \Omega_T} g_u^2 \, d \nu\\ 
&\leq \frac{c}{\rho^4}\int_{\Lambda_\rho\cap \Omega_T}\vint_{B_{2\rho}}|u-u_{2\rho}|^2  \, d \mu\,dt +c\left(\vint_{B_{2\lambda\rho}}g_{\eta}^q(x,0)\,d\mu\right)^\frac{2}{q},
 \end{align*}
 where $c=c(c_\mu,c_P,K)$. On the other hand we can write
 \begin{align*}
 &\frac{c}{\rho^4}\int_{\Lambda_\rho\cap \Omega_T}\vint_{B_{2\rho}}|u-u_{2\rho}|^2  \, d \mu\,dt \leq \left(\frac{c}{\rho^2}\esssup_{t \in
  \Lambda_{\rho}\cap (0,T)} \vint_{B_{\rho}} |u-u_{2\rho}|^2 \, d \mu\right)^{1-\frac{q}{2}}\\
& \qquad\cdot \frac{c}{\rho^2} \int_{\Lambda_\rho\cap \Omega_T}\left(\frac{c}{\rho^2} \vint_{B_{2\rho}}|u-u_{2\rho}|^2  \, d \mu\right)^\frac{q}{2}\,dt\\
&   \leq \left\{\frac{c}{\nu(Q_{2\rho})} \int_{\q{2\lambda\rho}\cap \Omega_T} g_u^2 \,d \nu+c\left( \vint_{B_{2\lambda\rho}} g_\eta^q(x,0)\,d\mu \right)^\frac{2}{q}\right\}^{1-\frac{q}{2}}\\
&\qquad\cdot \frac{c}{\nu(Q_{2\rho})}\int_{Q_{2\lambda \rho}\cap \Omega_T} g_u^q\,d\nu, 
 \end{align*}
 were we used Lemma \ref{parabolic Poincaré near the initial boundary} and the $(2,q)$-Poincaré inequality. By the $\varepsilon$-Young inequality we now obtain for every positive $\varepsilon$
 \begin{align*}
 &\frac{1}{\nu(Q_\rho)}\int_{Q_{\rho}\cap \Omega_T} g_u^2 \, d \nu\leq \varepsilon c\frac{1}{\nu(Q_{2\rho})}\int_{\q{2\lambda\rho}\cap \Omega_T} g_u^2 \,d \nu\\
 &\qquad+\varepsilon^{-1}c\left(\frac{1}{\nu(Q_{2\rho})}\int_{Q_{2\lambda \rho}\cap \Omega_T} g_u^q\,d\nu\right)^\frac{2}{q}+ \varepsilon c\left( \vint_{B_{2\lambda\rho}} g_\eta^q(x,0)\,d\mu \right)^\frac{2}{q},
 \end{align*}
 where $c=c(c_\mu,c_P, \lambda,K)$.
 \end{proof}
\end{lemma}

 \section{Estimates near the lateral boundary}\label{Estimates near the lateral boundary}

In this section we treat the almost complementary case to the one covered in section \ref{away from the lateral boundary}. This means that we establish a reverse Hölder estimate for parabolic quasiminimizers in the cylinders $Q_\rho(z_0)=B_{\rho}(x_0)\times \Lambda_\rho(t_0)$ which are such that $B_{2\rho}(x_0)\setminus \Omega\neq \emptyset$. However, in addition to this we will have to assume that $\rho$ is small enough, and so we cover the situation where the cylinders are contained in the vicinity of the lateral boundary.

Continuing the discussion from the beginning of Section \ref{away from the lateral boundary}, here in the case were $Q_\sigma(z_0)$ may overlap the lateral boundary of $\Omega_T$, we have to take the lateral boundary of $\Omega_T$ into consideration when building the test function for obtaining the energy estimate.  Indeed, instead of relying solely on the geometry of $Q_\rho(z_0)$ and $Q_\sigma(z_0)$, we also make use of the lateral boundary  condition.

After obtaining the energy estimate, as a consequence of building the lateral boundary condition into the test function, we cannot use the usual Poincaré inequality to the same extent as was done in section \ref{away from the lateral boundary}. Instead we use a version of the Poincaré inequality which introduces the variational capacity of the zero set of the function $u-\eta$.

Before going on, we introduce some concepts.
 
 \begin{definition}Let $F\subset X$ be an open set, and let $E\subset F$. The variational capacity is defined
 \begin{align*}
 \textrm{cap}_p(E,F)=\inf_{f} \int_{F} g_f^p \, d\mu,
 \end{align*}
 where the infimum is taken over all $f\in N_0^{1,p}(F)$ such that $f\geq 1$ on $E$. 
 \end{definition}
 As can be seen from the following, in our setting the variational capacity is closely related to the measure of the sets.
 \begin{lemma}\label{estimate of capacity}Let $X$ be a measure space equipped with a doubling measure $\mu$, and satisfies a weak $p$-Poincaré inequality. Let $E \subset B_\rho(x)$ with $0 < \rho < (1/8)$diam$(X)$. Then there exists a positive constant $c=c(c_P,c_\mu,\lambda, p)$ such that
 \begin{align*}
\frac{\mu(E)}{c \rho^p} \leq \textrm{cap}_p(E, B_{2\rho}(x)) \leq c \frac{\mu(B_\rho (x))}{\rho^p}.
\end{align*}
\begin{proof} For proof we refer the reader to \cite{Bjor02}.
\end{proof}
 \end{lemma}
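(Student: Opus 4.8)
The plan is to prove the two inequalities separately; both are classical once one has the Sobolev self-improvement of the Poincar\'e inequality (Hajłasz--Koskela, as quoted before Remark~\ref{poincare_remark}) and the reverse doubling property of $\mu$ available. For the upper bound I would exhibit a competitor: set $f(y)=\min\{1,\rho^{-1}(2\rho-d(y,x))_+\}$, which equals $1$ on $\overline{B_\rho(x)}\supset E$, vanishes on $X\setminus B_{2\rho}(x)$, and is $\rho^{-1}$-Lipschitz, so $f\in N_0^{1,p}(B_{2\rho}(x))$ with minimal $p$-weak upper gradient $g_f\le\rho^{-1}$ supported in $B_{2\rho}(x)\setminus B_\rho(x)$. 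Inserting $f$ into the definition of capacity and applying the doubling property gives
\[
\textrm{cap}_p(E,B_{2\rho}(x))\le\int_{B_{2\rho}(x)}g_f^p\,d\mu\le\frac{\mu(B_{2\rho}(x))}{\rho^p}\le\frac{c_\mu\,\mu(B_\rho(x))}{\rho^p}.
\]

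For the lower bound, let $f\in N_0^{1,p}(B_{2\rho}(x))$ be any admissible function, i.e.\ $f\ge 1$ on $E$. Replacing $f$ by $\min\{\max\{f,0\},1\}$ does not increase $g_f$ and preserves admissibility, so I may assume $0\le f\le 1$, whence $f\equiv 1$ on $E$. By the standard properties of Newtonian spaces with zero boundary values, the minimal $p$-weak upper gradient of the zero extension of $f$ coincides with $g_f$ on $B_{2\rho}(x)$ and vanishes a.e.\ outside it, so $\int_{B_R(x)}g_f^p\,d\mu=\int_{B_{2\rho}(x)}g_f^p\,d\mu$ for every $R\ge 2\rho$. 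Two further inputs are needed. First, since $\mu$ is doubling and supports a weak $(1,p)$-Poincar\'e inequality, it supports a weak $(\kappa,p)$-Poincar\'e inequality for some $\kappa>p$. Second, a reverse doubling estimate: $X$ is connected (being complete, doubling and supporting a Poincar\'e inequality), and $\rho<\tfrac{1}{8}\diam(X)$ forces the existence of a point at distance $>4\rho$ from $x$, so by the intermediate value theorem $d(\cdot,x)$ attains the value $3\rho$ at some $z$; then $B_\rho(z)\subset B_{4\rho}(x)\setminus B_{2\rho}(x)$, and iterating the doubling condition (via $B_{4\rho}(x)\subset B_{8\rho}(z)$) yields $\mu(B_{2\rho}(x))\le\theta\,\mu(B_{4\rho}(x))$ with $\theta=1-c_\mu^{-3}<1$.

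Now apply the weak $(\kappa,p)$-Poincar\'e inequality on $B_{4\rho}(x)$. Since $f$ is supported in $B_{2\rho}(x)$ and $0\le f\le 1$, the mean value $f_{B_{4\rho}(x)}\le\mu(B_{2\rho}(x))/\mu(B_{4\rho}(x))\le\theta$, so $|1-f_{B_{4\rho}(x)}|\ge 1-\theta$; restricting the left-hand side of the Poincar\'e inequality to $E$, where $f=1$, gives
\[
\Bigl(\vint_{B_{4\rho}(x)}|f-f_{B_{4\rho}(x)}|^\kappa\,d\mu\Bigr)^{1/\kappa}\ge(1-\theta)\Bigl(\frac{\mu(E)}{\mu(B_{4\rho}(x))}\Bigr)^{1/\kappa}.
\]
Bounding the right-hand side of the Poincar\'e inequality from above by $4c_P\rho\,\bigl(\mu(B_{4\rho}(x))^{-1}\int_{B_{2\rho}(x)}g_f^p\,d\mu\bigr)^{1/p}$ (using $\mu(B_{4\lambda\rho}(x))\ge\mu(B_{4\rho}(x))$ and that the $g_f^p$-integral is carried by $B_{2\rho}(x)$), then raising everything to the power $p$ and using $p/\kappa<1$ together with $\mu(E)\le\mu(B_\rho(x))\le\mu(B_{4\rho}(x))$ (so that $\mu(E)^{p/\kappa}\mu(B_{4\rho}(x))^{1-p/\kappa}\ge\mu(E)$), yields $\int_{B_{2\rho}(x)}g_f^p\,d\mu\ge(1-\theta)^p(4c_P)^{-p}\mu(E)\rho^{-p}$. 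Taking the infimum over all admissible $f$ gives the lower bound with $c=c(c_P,c_\mu,\lambda,p)$.

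I expect the main obstacle to be the lower bound, and within it two points that are short to state but carry the analytic weight. First, one genuinely needs the self-improved exponent $\kappa>p$: used naively, the $(1,p)$-Poincar\'e inequality produces only $\mu(E)^p/(\rho^p\mu(B_\rho(x))^{p-1})$ on the right, which is strictly weaker than $\mu(E)/\rho^p$ when $\mu(E)\ll\mu(B_\rho(x))$. Second, the reverse doubling step is the only place the hypothesis $\rho<\tfrac{1}{8}\diam(X)$ and the global structure of $X$ enter, and it is essential: on a compact $X$ with $B_{2\rho}(x)=X$ the function $1$ is admissible, so $\textrm{cap}_p(E,X)=0$ while $\mu(E)$ may be positive, and the asserted lower bound fails.
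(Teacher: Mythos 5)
Your proof is correct. Note, however, that the paper does not actually prove Lemma~\ref{estimate of capacity}: it simply cites J.~Bj\"orn \cite{Bjor02}, so there is no internal argument to compare against; what you have written is essentially the standard proof from that circle of ideas, reconstructed in full. Your upper bound via the $\rho^{-1}$-Lipschitz cutoff is the usual one. For the lower bound, the three points you flag are exactly the ones that carry the weight, and you handle each correctly: (i) the truncation to $0\le f\le 1$ and the zero extension are legitimate because truncation does not increase the minimal $p$-weak upper gradient, minimal upper gradients vanish a.e.\ on sets where the function is constant, and they are local, so the global minimal upper gradient of the extension agrees a.e.\ on $B_{2\rho}(x)$ with the one computed in $B_{2\rho}(x)$; (ii) the self-improved exponent $\kappa>p$ from Haj\l asz--Koskela \cite{HajlKosk95} (as recalled before Remark~\ref{poincare_remark}) is genuinely needed, since the plain $(1,p)$-Poincar\'e inequality only yields $\mu(E)^p/(\rho^p\mu(B_\rho(x))^{p-1})$, and your interpolation $\mu(E)^{p/\kappa}\mu(B_{4\rho}(x))^{1-p/\kappa}\ge\mu(E)$ is the correct fix; (iii) the reverse-doubling step, using connectedness (a consequence of the Poincar\'e inequality with a doubling measure) to find $z$ with $d(z,x)=3\rho$ and hence $\mu(B_{2\rho}(x))\le(1-c_\mu^{-3})\mu(B_{4\rho}(x))$, is precisely where $\rho<\tfrac18\operatorname{diam}(X)$ enters, and your counterexample with $B_{2\rho}(x)=X$ shows it cannot be dispensed with. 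The only cosmetic remarks: the constant in your final estimate should be traced through the Sobolev-improved Poincar\'e constants $c_P'$, $\lambda'$, which by \cite{HajlKosk95} depend only on $c_P$, $c_\mu$, $\lambda$, $p$, so the claimed dependence $c=c(c_P,c_\mu,\lambda,p)$ is preserved; and the admissibility class requires $f\ge 1$ on $E$, so after truncation you indeed get $f\equiv 1$ on $E$, which is all the left-hand side restriction to $E$ uses. What your self-contained argument buys, compared with the paper's bare citation, is a transparent account of where each hypothesis (doubling, Poincar\'e, the diameter restriction) is used and of how the constant arises.
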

  We will need the following version of Poincaré's inequality, which gives an upper gradient estimate for the integral average of any Newtonian function. We use the self improving property of the usual Poincaré inequality to obtain $1<q<2$ on the right hand side. 
 \begin{theorem}[Poincaré with capacity]\label{Poincaré with capacity}
Suppose $f\in N^{1,2}(B_{2\rho})$. Denote $N_{B_\rho}(f)=\{\,x\in B_\rho\,:\,f(x)=0\,\}$. Then there exists a $1<q_0<2$ and a positive constant $c=c(c_\mu,c_P,\lambda)$ such that for any $q_0\leq q\leq 2$, we have
 \begin{align*}
 \left( \kint_{B_{2\rho}} |f|^2 \, d\mu \leq \right)^{\frac{1}{2}} \leq c \left( \frac{1}{\textrm{cap}_q(N_{B_\rho}(f), B_{2\rho})} \int_{B_{2\lambda \rho} }g_f^q\, d\mu \right)^{\frac{1}{q}},
 \end{align*}   
 for every $0<\rho<(1/8)$diam$(X)$.
 \begin{proof}
 First we assume that
 \begin{align*}
 f_{B_{2\rho}}=\kint_{B_{2\rho}} f(x)\, d\mu\neq 0.
 \end{align*}
Take $\phi \in$ Lip$_c (B_{2\rho})$ and $0\leq \phi \leq 1$, such that $\phi=1$ in $B_\rho$ and  $g_\phi\leq \frac{2}{\rho}$. Define $v:\, X\rightarrow \R$ by setting
\begin{align*}
v= \begin{cases} \phi (f_{B_{2\rho}}-f )&\mbox{ in } B_{2\rho}\\
0 &\mbox{ in } x\in X\setminus B_{2\rho}. \end{cases}.
\end{align*}
Then $v \in N^{1,2} (X)$, the support of $v$ is a compact subset of $B_{2\rho}$ and we have $v=f_{B_{2\rho}}-f$ in $B_\rho$.
From Remark \ref{poincare_remark} we know there exists a $1<q_0<2$ so that for any $q_0\leq q \leq 2$  the weak $(q,q)$-Poincaré inequality holds.
By the product rule for upper gradients and then the $(q,q)$- Poincaré inequality, 
\begin{align*}
\int_{B_{2\rho}} g_v^q \,d\mu &\leq \int_{B_{2\rho}} (g_f \phi + |f_{B_{2\rho}}-f|g_\phi)^q\, d\mu \\
&\leq c\int_{B_{2\rho}} g_f^q \, d\mu + c\frac{2^q}{\rho^q} \int_{B_{2\rho}}|f_{B_{2\rho}}-f|^q \, d\mu
\leq c \int_{B_{\lambda 2\rho}} g_f^q \, d\mu,
\end{align*}
where $c=c(c_P)$. On the other hand, since $N_{B_\rho} (f) \subset \{\,f_{B_{2\rho}}^{-1} v=1\,\}$, we have by the definition of the $q$-capacity
\begin{align*}
\frac{1}{|f_{B_{2\rho}}|^{q}}\int_{B_{2\rho}} g_v^q \, d\mu = \int_{B_{2\rho}} g_{f_{B_{2\rho}}^{-1} v}^q \, d\mu\geq \textrm{cap}_q(N_{B_\rho}(f), B_{2\rho}).
\end{align*}
This gives us
\begin{align*}
|f_{B_{2\rho}}|\leq \left( \frac{1}{\textrm{cap}_q(N_{B_\rho}(f), B_{2\rho})} \int_{B_{2\rho}} g_v^q \, d\mu\right)^{\frac{1}{q}}.
\end{align*}
Now we can use the $(2,q)$-Poincaré inequality together with the above inequality, and then Lemma \ref{estimate of capacity} to write
\begin{align*}
&\left( \kint_{B_{2\rho}} |f|^2 \, d\mu \right)^\frac{1}{2} \leq \left( \kint_{B_{2\rho}} |f_{B_{2\rho}}-f|^2 \, d\mu \right)^\frac{1}{2}+|f_{B_{2\rho}}| \\
&\qquad\leq  c_P\rho \left( \kint_{B_{2\lambda \rho}} g_f^q \, d\mu \right)^\frac{1}{q}+\left( \frac{1}{\textrm{cap}_q(N_{B_\rho}(f), B_{2\rho})} \int_{B_{2\rho}} g_v^q \, d\mu\right)^{\frac{1}{q}} \\
&\qquad\leq c \left( \frac{1}{\textrm{cap}_q(N_{B_\rho}(f), B_{2\rho})} \int_{B_{2\lambda\rho}} g_f^q \, d\mu\right)^{\frac{1}{q}},
\end{align*}
for any $0<\rho<(1/8)$diameter$(X)$, where $c=c(c_\mu,c_P,\lambda)$. Assume then that $f_{B_{2\rho}}=0$. Then we may directly use the $(2,q)$-Poincaré inequality together with Lemma \ref{estimate of capacity} to obtain the result.
\end{proof}
 \end{theorem} 
 
 When considering the variational capacity of the zero set of $u-\eta$ in a metric ball overlapping the lateral boundary, the regularity of the lateral boundary of $\Omega_T$ in the sense of variational capacity comes into play. In order to build this into the assumption of the set $\Omega$, we introduce the following.
 \begin{definition}\label{thickness}
 A set $E\subset X$ is said to be uniformly $p$-thick, if there exist positive constants $\delta$ and $\rho_0$ so that
 \begin{align*}
 \textrm{cap}_p(E\cap B_\rho(x), B_{2\rho}(x))\geq \delta \textrm{cap}_p( B_\rho(x), B_{2\rho}(x)),
 \end{align*}
 for every $x\in E$ and $0<\rho<\rho_0$.  
 \end{definition}
The uniform $p$-thickness satisfies the following deep self improving property, which will be needed when showing the reverse Hölder inequality.
 \begin{theorem}\label{self improving thickness} Let $X$ be a proper linearly locally convex metric space endowed with a doubling regular Borel measure, supporting a $(1,q_0)$-Poincaré inequality for some $1\leq q_0<\infty$. Let $p>q_0$ and suppose $E\subset X$ is uniformly $p$-thick. Then there exists $q<p$ so that $E$ is uniformly $q$-thick.
 \begin{proof}
 See \cite{BjorMacmShan01}.
 \end{proof}
 \end{theorem}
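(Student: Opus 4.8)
The plan is to recast uniform $p$-thickness as a capacitary (Maz'ya type) Poincar\'e inequality for functions vanishing on $E$, to invoke the self-improvement of \emph{that} inequality, and to read the conclusion back off. The standing tools are the doubling property, the measure--capacity comparison of \lemref{estimate of capacity}, the self-improving Poincar\'e inequality of \remref{poincare_remark}, and the capacitary Poincar\'e inequality of \theoref{Poincar� with capacity}; properness and linear local convexity of $X$ enter so that the capacity estimates of \lemref{estimate of capacity} remain available on annuli and at every scale, and so that capacities may be computed using Lipschitz competitors.

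\textbf{Step 1: a Maz'ya inequality equivalent to thickness.} First I would show that $E$ is uniformly $p$-thick if and only if there are constants $C$ and $\rho_0>0$ such that
\[
\frac{1}{\mu(B_\rho(x))}\int_{B_\rho(x)}|u|^p\,d\mu\le C\rho^p\,\frac{1}{\mu(B_{2\lambda\rho}(x))}\int_{B_{2\lambda\rho}(x)}g_u^p\,d\mu
\]
for every $x\in E$, every $0<\rho<\rho_0$, and every $u\in N^{1,p}(B_{2\lambda\rho}(x))$ vanishing on $E\cap B_\rho(x)$. The forward implication follows from the exponent-$p$ version of \theoref{Poincar� with capacity}: since the zero set of such a $u$ contains $E\cap B_\rho(x)$, uniform $p$-thickness and monotonicity of capacity give $\textrm{cap}_p(N_{B_\rho(x)}(u),B_{2\rho}(x))\ge\delta\,\textrm{cap}_p(B_\rho(x),B_{2\rho}(x))$, and \lemref{estimate of capacity} identifies the last capacity with $\mu(B_\rho(x))\rho^{-p}$ up to constants; doubling then yields the displayed inequality. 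For the converse, let $h$ be any Lipschitz competitor for $\textrm{cap}_p(E\cap B_\rho(x),B_{2\rho}(x))$, truncated so that $0\le h\le1$ and $h=1$ on $E\cap B_\rho(x)$. If $h<1/2$ on at least half of $B_\rho(x)$ in measure, apply the Maz'ya inequality to $u=1-h$, which vanishes on $E\cap B_\rho(x)$, to get $\int_{B_{2\rho}(x)}g_h^p\gtrsim\mu(B_\rho(x))\rho^{-p}$; otherwise $\min(1,2h)$ equals $1$ on a set of measure comparable to $\mu(B_\rho(x))$, and \lemref{estimate of capacity} bounds $\int_{B_{2\rho}(x)}g_h^p$ from below by the same quantity. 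Taking the infimum over $h$ shows $E$ is uniformly $p$-thick.

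\textbf{Step 2: self-improvement of the Maz'ya inequality (the main obstacle).} The heart of the matter is to prove that the inequality of Step 1 self-improves: there is $q$ with $q_0<q<p$ and a constant $C'$ for which the same inequality holds with the exponent $p$ replaced by $q$. This is the metric-space form of Lewis's theorem, and I would follow the argument of \cite{Lewi88} and \cite{BjorMacmShan01}. The mechanism is a Gehring-type argument: combining the exponent-$p$ Maz'ya inequality, applied at all scales and centres, with Caccioppoli- and Sobolev--Poincar\'e estimates for the $p$-capacitary potential of $E\cap B_\rho(x)$ in $B_{2\rho}(x)$, one obtains a weak reverse H\"older inequality for the upper gradient of that potential which is \emph{uniform} in $x\in E$ and in $\rho$; Gehring's lemma in doubling metric measure spaces then upgrades its integrability, and this improvement is fed back to lower the exponent in the Maz'ya inequality. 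The delicate points are the uniformity of every constant in centre and scale, and the analysis of the capacitary potential up to the thick set $E\cap B_\rho(x)$ --- which is an elliptic counterpart of the up-to-the-boundary estimates of Sections~\ref{away from the lateral boundary} and \ref{Estimates near the lateral boundary} of the present paper; properness and linear local convexity of $X$ are what keep \lemref{estimate of capacity} and its annular versions in force throughout this step.

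\textbf{Step 3: back to thickness.} Finally, applying the converse direction of Step 1 to the improved inequality gives $\textrm{cap}_q(E\cap B_\rho(x),B_{2\rho}(x))\gtrsim\mu(B_\rho(x))\rho^{-q}\approx\textrm{cap}_q(B_\rho(x),B_{2\rho}(x))$ for every $x\in E$ and every sufficiently small $\rho$. Hence $E$ is uniformly $q$-thick, which is the assertion.
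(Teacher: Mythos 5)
The paper does not actually prove this theorem: its ``proof'' is the citation \cite{BjorMacmShan01}, so there is no internal argument to compare with, and your proposal has to be judged as a self-contained attempt. On that standard, Steps 1 and 3 are essentially sound. The equivalence of uniform $p$-thickness with a Maz'ya-type capacitary Poincar\'e inequality does follow in one direction from an exponent-$p$ analogue of the paper's capacitary Poincar\'e inequality together with Lemma~\ref{estimate of capacity}, monotonicity of capacity and doubling, and in the other direction from your truncation dichotomy plus the lower bound of Lemma~\ref{estimate of capacity}; only note that the capacitary Poincar\'e inequality is stated in the paper for $N^{1,2}$ and exponents $q_0\le q\le 2$, so the general-$p$ version you invoke must itself be proved (routine, but not quotable), and that the mismatch between $B_{2\lambda\rho}$ and $B_{2\rho}$ is harmless by doubling.

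The genuine gap is Step 2, which is where the theorem lives. What you give there is a description of the expected mechanism --- Caccioppoli and Sobolev--Poincar\'e estimates for the $p$-capacitary potential of $E\cap B_\rho(x)$ in $B_{2\rho}(x)$, a weak reverse H\"older inequality with constants uniform in the centre $x\in E$ and the scale $\rho$, a Gehring-type improvement, and a transfer of the integrability gain back into a lower capacitary exponent --- followed by an appeal to \cite{Lewi88} and \cite{BjorMacmShan01}, i.e.\ to the very works the paper cites. Nothing in the paper can be substituted for this: Theorem~\ref{global gehring} is a parabolic Gehring lemma tailored to space-time cylinders and the boundary terms $f_1,f_2$, and the boundary estimates of Sections~\ref{away from the lateral boundary} and \ref{Estimates near the lateral boundary} concern parabolic quasiminimizers, not the elliptic capacitary potential; in particular the reverse H\"older estimate \emph{up to the fat set} with uniform constants, and the argument that improved integrability of the potential's upper gradient lowers the exponent in the capacity density condition, are precisely the hard content of Lewis's theorem and of its metric-space adaptation. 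So your outline correctly identifies the route taken in the cited literature, but it asserts rather than proves the decisive step; to close the argument you would either have to carry out that self-improvement in detail or, as the paper does, cite \cite{BjorMacmShan01} outright.
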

It is known, see Lemma 4.4 in \cite{ATG}, that a complete metric measure space equipped with a doubling measure is proper, and our assumptions for $X$ are sufficient for using Theorem \ref{self improving thickness}.
 
 Now we can begin to build the estimates needed for the reverse Hölder inequality. As before, we start by choosing a convenient test function in the definition of parabolic quasiminimizers and derive the fundamental energy estimate. Notice that the following is a  quite general estimate, we do not yet need the condition $B_{2\rho}(x_0)\setminus \Omega \neq 0$.

 \begin{lemma}[Energy estimate]\label{energy estimate near the lateral boundary}
There exists a positive constant $c=c(K)$, such that
 \begin{align*}
& \esssup_{t\in \Lambda_\rho \cap (0,T)}\int_{B_\rho\cap\Omega}|u(x,t)-\eta(x,t)|^2 \,d\mu + \int_{Q_\rho\cap \Omega_T} g^2_u \, d\nu\\
 &\quad\leq c \int_{(Q_\sigma\setminus Q_\rho)\cap \Omega_T} g_{u}^2\, d\nu +c\int_{Q_\sigma\cap \Omega_T} |u-\eta|^2 \left(1+\frac{1}{(\sigma-\rho)^2} \right) \, d\nu\\
& \quad+c\int_{Q_\sigma\cap \Omega_T} \left(g_\eta^2+\left| \frac{\partial \eta}{\partial t} \right|^2\right)\, d\nu.
 \end{align*}
 
 \begin{proof}
Assume $Q_\rho=B_\rho(x_0) \times \Lambda_\rho(t_0)$ such that $\Lambda_\rho(t_0) \cap (0,T)\neq \emptyset$ and $\rho<\sigma$. Let $t'\in \Lambda_\rho \cap (0,T)$, and define
 \begin{align*}
 &\chi_h(t) = 
 \begin{cases}
 \frac{t-h}{h}, &h\leq t \leq 2h, \\
 1,& 2h \leq t \leq t'-2h,\\
 \frac{t'-h-t}{h},&t'-2h\leq t \leq t'-h,\\
 0,&\textrm{otherwise}.
\end{cases}
 \end{align*}
 Let $\varphi\in C^\infty(0,T;N_0^{1,2}(B_\sigma))$, $0\leq \varphi\leq 1$, be such that  $\varphi=1$ in $B_\rho$, and 
\begin{align}\label{estimate for varphi}
g_\varphi^2+\left| \frac{\partial \varphi}{\partial t} \right| \leq \frac{2}{(\sigma-\rho)^2}. 
\end{align}
Consider the function $\phi=-\varphi (u_\varepsilon-\eta_\varepsilon) \chi_{h,\delta}$. Again, since $u$ is a parabolic quasiminimizer in $\Omega_T$ and $\phi$ has the required smoothness for a test function, we can insert $\phi$ in inequality \eqref{quasiminimizer inequality} and examine the resulting terms.
We begin by examining the first term on the left hand side. After adding and substracting $\eta_\varepsilon$ and then  conducting partial integration with respect to time, we can write
\begin{align*}
-&\int_{\{\phi\neq 0\}} \parts{u_\varepsilon}{t} \phi\,d\nu\\
&=\int_{\{\phi\neq 0\}}\frac{1}{2} \frac{\partial}{\partial t}((u_\varepsilon-\eta_\varepsilon)^2) \varphi\chi_{h} \,d\nu+\int_{\{\phi\neq 0\}}\frac{\partial \eta_\varepsilon}{\partial t} \varphi(u_\varepsilon-\eta_\varepsilon) \chi_{h} \,d\nu.
\end{align*}
Performing partial integration on the first term on the right hand side yields now
\begin{align*}
-&\int_{\{\phi\neq 0\}} \parts{u_\varepsilon}{t} \phi\,d\nu\\
&=-\frac{1}{2h}\int_{h}^{2h}\int_{B_\sigma\cap \Omega}  (u_\varepsilon-\eta_\varepsilon)^2 \varphi\,d\mu\,dt+\frac{1}{2h}\int_{t'-2h}^{t'-h} \int_{B_\sigma\cap \Omega} (u_\varepsilon-\eta_\varepsilon)^2 \varphi\,d\mu\,dt\\
&\quad-\int_{\{\phi\neq 0\}}\frac{1}{2} (u_\varepsilon-\eta_\varepsilon)^2 \frac{\partial \varphi}{\partial t}\chi_{h} \,d\nu-\int_{\{\phi\neq 0\}} \frac{\partial \eta_\varepsilon}{\partial t} \varphi(u_\varepsilon-\eta_\varepsilon) \chi_{h} \,d\nu.
\end{align*}
Hence, after taking the limit $\varepsilon\rightarrow 0$ and then the limit $h \rightarrow 0$ we have the following: For almost every $0<t'<T$, using the initial condition \eqref{initial condition 2} yields
\begin{align*}
\lim_{\varepsilon,h\rightarrow 0}-\int_{{\{\phi\neq 0\}}} u_\varepsilon \frac{\partial \phi}{\partial t}\,d\nu
\geq\frac{1}{2}\int_{B_\rho\cap \Omega}(u(x,t')-\eta(x,t'))^2 \varphi(x,t')\,d\mu\\
-\int_{Q_\sigma \cap\Omega_T}\frac{1}{2} (u-\eta)^2 \left|\frac{\partial \varphi}{\partial t}\right| \,d\nu-\int_{Q_\sigma\cap\Omega_T} \left|\frac{\partial \eta}{\partial t}\right| \varphi \left|u-\eta\right|  \,d\nu.
\end{align*}
Also,
\begin{align*}
&\lim_{\varepsilon,h \rightarrow 0}\int_{\{\phi\neq 0\}} (g_u^2)_{\eps}\,d\nu\geq \int_{Q_\rho\cap \Omega_T} g_{u}^2\,d\nu.
\end{align*}
On the right hand side of \eqref{quasiminimizer inequality}, $g_{(u-
\eta)-(u-\eta)_\eps}^2\rightarrow 0$  and $(g_{u(\cdot,\cdot-s)-u}^2)_\eps\rightarrow 0$ in $L_\textrm{loc}^1(\Omega_T)$ as $\eps\rightarrow 0$. Hence
\begin{align*}
 &\limsup_{\varepsilon,h \rightarrow 0} \int_{\{\phi \neq 0\}}\left( g_{u(\cdot,\cdot-s)-\phi}^2\right)_\varepsilon\,d\nu \leq c \int_{Q_\sigma \cap \Omega_T} g_{u-\varphi(u-\eta)}^2\,d\nu\\
& \leq c \int_{Q_\sigma \cap \Omega_T}( g_{(1-\varphi)(u-\eta)}^2+g_{\eta}^2)\,d\nu \leq c \int_{Q_\sigma \cap \Omega_T} (1-\varphi)^2\left(g_{u}^2+g_\eta^2\right)\,d\nu\\
&\quad+c \int_{Q_\sigma \cap \Omega_T}|u-\eta|^2g_\varphi^2\,d\nu+c \int_{Q_\sigma \cap \Omega_T} g_{\eta}^2\,d\nu.
\end{align*}
Noting that $\varphi=1$ in $Q_\rho$, combining all the obtained results together through \eqref{quasiminimizer inequality} and then using Young's inequality and \eqref{estimate for varphi} yields us the desired expression.  
\end{proof}
\end{lemma}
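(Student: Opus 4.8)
The plan is to derive the energy estimate from the quasiminimizer inequality~\eqref{quasiminimizer inequality} by choosing the test function
\[
\phi=-\varphi(u_\varepsilon-\eta_\varepsilon)\chi_{h},
\]
where $\chi_h$ is the piecewise linear temporal cutoff supported in $[h,t'-h]$ for a fixed $t'\in\Lambda_\rho\cap(0,T)$, and $\varphi\in C^\infty(0,T;N_0^{1,2}(B_\sigma))$ is a spatial cutoff with $\varphi\equiv 1$ on $B_\rho$ and the bound~\eqref{estimate for varphi}. This $\phi$ is admissible because $u$ is a quasiminimizer in $\Omega_T$, $\phi$ has compact support in $\Omega_T$, and $u-\eta\in N_0^{1,2}(\Omega)$ slice-wise (so the product $\varphi(u_\varepsilon-\eta_\varepsilon)$ stays in $N_0^{1,2}(\Omega)$ and extends by zero). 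We then analyze the three terms of~\eqref{quasiminimizer inequality} separately and pass to the limit $\varepsilon\to0$ followed by $h\to0$.

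First I would handle the parabolic term $-\int\partial_t u_\varepsilon\,\phi\,d\nu$. Adding and subtracting $\eta_\varepsilon$ turns it into $\int\frac12\partial_t((u_\varepsilon-\eta_\varepsilon)^2)\varphi\chi_h\,d\nu+\int\partial_t\eta_\varepsilon\,\varphi(u_\varepsilon-\eta_\varepsilon)\chi_h\,d\nu$; integrating the first piece by parts in $t$ moves the derivative onto $\varphi\chi_h$, which produces two boundary layers from $\chi_h'$ near $t=h$ and $t=t'-h$ together with an interior term carrying $\partial_t\varphi$. As $h\to0$ the layer near $t=0$ is controlled using the initial condition~\eqref{initial condition 2} (so it contributes a harmless sign after the limit, being subtracted), while the layer near $t'$ converges to $\frac12\int_{B_\rho\cap\Omega}(u(\cdot,t')-\eta(\cdot,t'))^2\varphi(\cdot,t')\,d\mu$; the mollification $\varepsilon\to0$ is justified by Lemma~\ref{convergence of upper gradient} and the $L^2$ continuity of the involved averages. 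This yields the $\esssup$ term on the left together with the $\int_{Q_\sigma}(u-\eta)^2|\partial_t\varphi|\,d\nu$ and $\int_{Q_\sigma}|\partial_t\eta|\,\varphi|u-\eta|\,d\nu$ contributions on the right. The gradient term $\int(g_u^2)_\varepsilon\,\phi\text{-region}$ converges to at least $\int_{Q_\rho\cap\Omega_T}g_u^2\,d\nu$ by lower semicontinuity under mollification and the fact that $\varphi\equiv1$ on $Q_\rho$.

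Next I would estimate the right-hand side $K\int(g_{u(\cdot,\cdot-s)+\phi}^2)_\varepsilon\,d\nu$. Since $u(\cdot,\cdot-s)+\phi=u(\cdot,\cdot-s)-\varphi(u_\varepsilon-\eta_\varepsilon)\chi_h$, using sublinearity of minimal upper gradients, the product rule $g_{\varphi w}\le\varphi g_w+|w|g_\varphi$, and Lemma~\ref{convergence of upper gradient}(a),(b) to kill the terms $g_{u(\cdot,\cdot-s)-u}$ and $g_{u-u_\varepsilon}$ as $\varepsilon\to0$, the limsup is bounded by $c\int_{Q_\sigma\cap\Omega_T}g_{u-\varphi(u-\eta)}^2\,d\nu$. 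Writing $u-\varphi(u-\eta)=(1-\varphi)(u-\eta)+\eta$ and applying the product rule once more splits this into the $(1-\varphi)^2(g_u^2+g_\eta^2)$ part (which, since $1-\varphi$ is supported in $Q_\sigma\setminus Q_\rho$, gives the $\int_{(Q_\sigma\setminus Q_\rho)\cap\Omega_T}g_u^2\,d\nu$ term and a $g_\eta^2$ term), the $|u-\eta|^2g_\varphi^2$ part, and a leftover $g_\eta^2$ term. Finally, combining the three estimates through~\eqref{quasiminimizer inequality}, inserting the bound~\eqref{estimate for varphi} on $g_\varphi^2$ and $|\partial_t\varphi|$, absorbing $\varphi|u-\eta|\,|\partial_t\eta|\le\frac12(|u-\eta|^2+|\partial_t\eta|^2)$ by Young's inequality, and taking the essential supremum over $t'\in\Lambda_\rho\cap(0,T)$ gives the claimed inequality with $c=c(K)$.

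The main obstacle is the justification of the limit passages in the parabolic term: one must show that the mollified boundary layers $\frac1{2h}\int_h^{2h}\int_{B_\sigma\cap\Omega}(u_\varepsilon-\eta_\varepsilon)^2\varphi\,d\mu\,dt$ converge correctly, which requires first letting $\varepsilon\to0$ (controlled, as in Lemma~\ref{energy estimate near the initial boundary}, by $L^2_{\mathrm{loc}}(0,T)$-continuity of $t\mapsto(u-\eta)^2_\sigma$-type averages and Lemma~\ref{convergence of upper gradient}) and only then $h\to0$, invoking~\eqref{initial condition 2} for the layer at $t=0$ and a Lebesgue-point argument for the layer at $t'$. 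Everything else is a routine combination of the product rule for upper gradients, Young's inequality, and the cutoff bounds.
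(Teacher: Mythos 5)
Your proposal is correct and follows essentially the same route as the paper: the same test function $\phi=-\varphi(u_\varepsilon-\eta_\varepsilon)\chi_h$, the same add-and-subtract of $\eta_\varepsilon$ with integration by parts in time, the limits $\varepsilon\to 0$ then $h\to 0$ handled via the initial condition \eqref{initial condition 2} and Lemma \ref{convergence of upper gradient}, and the same splitting $u-\varphi(u-\eta)=(1-\varphi)(u-\eta)+\eta$ with the product rule, Young's inequality and the cutoff bound \eqref{estimate for varphi} on the right-hand side. Your added remarks on admissibility of $\phi$ (using $u(\cdot,t)-\eta(\cdot,t)\in N_0^{1,2}(\Omega)$) and on the order of the limit passages make explicit what the paper leaves implicit, but they do not change the argument.
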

Having established the fundamental energy estimate, we derive from it a Caccioppoli inequality by using the hole filling iteration. As in section \ref{away from the lateral boundary}, we use the fundamental energy estimate for $\rho<\sigma<2\rho$.
 \begin{theorem}[Caccioppoli] \label{Caccioppoli near the lateral boundary} There exists a positive constant $c=c(K)$, such that
 \begin{align*}
 \int_{Q_\rho \cap \Omega} g_u^2\, d\nu \leq c\int_{Q_{2\rho}\cap \Omega} |u-\eta|^2\left(1+\frac{1}{\rho^2}\right)\,d\nu
 +c\int_{Q_{2\rho}\cap \Omega} \left(g_\eta^2+\left| \frac{\partial \eta}{\partial t}\right|^2 \right) \,d\nu.
 \end{align*}
 \begin{proof} 
 After adding  $c\int_{Q_\rho \cap \Omega_T} g^2_u \, d\nu$ to both sides of the expression in Lemma \ref{energy estimate near the lateral boundary}, and then dividing by $c+1$, we can write
 \begin{align*}
 & \int_{Q_\rho \cap \Omega_T} g^2_u \, d\nu\\
 &\quad\leq \frac{c}{c+1} \int_{(Q_\sigma\setminus Q_\rho)\cap \Omega_T}
 g_{u}^2\, d\nu +\frac{c}{c+1}\int_{Q_\sigma \cap \Omega_T } |u-\eta|^2 \left(1+\frac{2}{(\sigma-\rho)^2}\right) \, d\nu\\
 &\qquad+\frac{c}{c+1}\int_{Q_\sigma \cap \Omega_T } \left(g_\eta^2+\left| \frac{\partial \eta}{\partial t} \right|^2\right)\, d\nu.
 \end{align*}
Then we choose 
 \begin{align*}
 \rho_0=\rho,\quad \rho_i-\rho_{i-1}=\frac{1-s}{s}s^i \rho, \quad i=1,2,\dots,k\quad s^2=\frac{1}{2}\left(\frac{c}{c+1}+1\right),
 \end{align*}
 replace $\rho$ by $\rho_{i-1}$ and $\sigma$ by $\rho_i$, and iterate to obtain
 \begin{align*}
 &\int_{Q_\rho \cap \Omega_T}g_u^2\,d\mu\,dt \leq  \left(\frac{c}{c+1}\right)^k \int_{Q_{\rho_k}\cap \Omega_T} g_u^2 \,d\nu\\
 &\qquad+\sum_{i=1}^{k}\left(\frac{c}{c+1}\right)^i\left(\int_{Q_{\rho_i} \cap \Omega_T } |u-\eta|^2 \left(1+\frac{2}{(\rho_i-\rho_{i-1})^2}\right) \, d\nu\right.\\
&\qquad\left. +\int_{Q_{\rho_i} \cap \Omega_T } \left(g_\eta^2+\left| \frac{\partial \eta}{\partial t} \right|^2\right)\, d\nu\right).
 \end{align*}
 Now, taking the limit $k\rightarrow \infty$ leads to the expression
 \begin{align*}
 \int_{Q_\rho \cap \Omega} g_u^2\, d\nu \leq c\int_{Q_{2\rho}\cap \Omega} |u-\eta|^2\left(1+\frac{1}{\rho^2}\right)\,d\nu
 +c\int_{Q_{2\rho}\cap \Omega} \left(g_\eta^2+\left| \frac{\partial \eta}{\partial t}\right|^2 \right) \,d\nu,
 \end{align*}
 where $c=c(K)$.
 \end{proof}
\end{theorem}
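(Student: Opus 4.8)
The plan is to derive the Caccioppoli estimate from the energy estimate of Lemma~\ref{energy estimate near the lateral boundary} by the standard hole-filling iteration, exactly as was done near the initial boundary in Lemma~\ref{caccioppoli near the initial boundary}. First I would rewrite the energy estimate of Lemma~\ref{energy estimate near the lateral boundary} for arbitrary radii $\rho<\sigma\le 2\rho$, dropping the supremum term on the left (it is nonnegative) to keep only the term $\int_{Q_\rho\cap\Omega_T}g_u^2\,d\nu$. The obstacle to a direct estimate is the presence of the ``bad'' term $c\int_{(Q_\sigma\setminus Q_\rho)\cap\Omega_T}g_u^2\,d\nu$ on the right-hand side, whose coefficient cannot simply be absorbed since it is multiplied by a fixed constant $c=c(K)$ rather than a small one. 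The hole-filling trick circumvents this: add $c\int_{Q_\rho\cap\Omega_T}g_u^2\,d\nu$ to both sides, so that $\int_{(Q_\sigma\setminus Q_\rho)\cap\Omega_T}g_u^2 + \int_{Q_\rho\cap\Omega_T}g_u^2 = \int_{Q_\sigma\cap\Omega_T}g_u^2$, and then divide through by $c+1$ to get an inequality of the form
\[
\int_{Q_\rho\cap\Omega_T}g_u^2\,d\nu \le \theta \int_{Q_\sigma\cap\Omega_T}g_u^2\,d\nu + (\text{terms in }u-\eta,\ g_\eta,\ \partial_t\eta),
\]
with $\theta=c/(c+1)<1$.

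Next I would set up the iteration over a geometric sequence of radii. Choose $\rho_0=\rho$ and $\rho_i-\rho_{i-1}=\frac{1-s}{s}s^i\rho$ with $s^2=\tfrac12(\theta+1)$, so that $\rho_i\uparrow 2\rho$ as $i\to\infty$ (since $\sum_{i\ge1}s^i\rho\cdot\frac{1-s}{s} = \rho(1-s)\sum_{i\ge0}s^i=\rho$, noting $s<1$), and each $\rho_i\le 2\rho_{i-1}$. Applying the hole-filled inequality with $\rho$ replaced by $\rho_{i-1}$ and $\sigma$ by $\rho_i$ and iterating $k$ times yields
\[
\int_{Q_\rho\cap\Omega_T}g_u^2\,d\nu \le \theta^k \int_{Q_{\rho_k}\cap\Omega_T}g_u^2\,d\nu
+ \sum_{i=1}^k \theta^i\!\left(\int_{Q_{\rho_i}\cap\Omega_T}|u-\eta|^2\Bigl(1+\tfrac{2}{(\rho_i-\rho_{i-1})^2}\Bigr)d\nu + \int_{Q_{\rho_i}\cap\Omega_T}\bigl(g_\eta^2+|\partial_t\eta|^2\bigr)d\nu\right).
\]
Here I would note the key convergence point: the factor $(\rho_i-\rho_{i-1})^{-2} = \bigl(\tfrac{s}{1-s}\bigr)^2 s^{-2i}\rho^{-2}$ grows like $s^{-2i}$, but since $\theta/s^2 = 2\theta/(\theta+1)<1$, the series $\sum_i \theta^i s^{-2i}$ still converges, and its sum contributes a constant $c=c(K)$ times $\rho^{-2}$. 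The remaining sum $\sum_i\theta^i$ converges trivially. All the integrals $\int_{Q_{\rho_i}\cap\Omega_T}(\cdot)$ are bounded by the corresponding integral over $Q_{2\rho}\cap\Omega_T$ since $\rho_i<2\rho$.

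Finally I would let $k\to\infty$: the first term $\theta^k\int_{Q_{\rho_k}\cap\Omega_T}g_u^2\,d\nu$ vanishes because $\theta^k\to 0$ and $\int_{Q_{2\rho}\cap\Omega_T}g_u^2\,d\nu<\infty$ (as $u\in L^2_{\mathrm{loc}}(0,T;N^{1,2}(\Omega))$), while the series converges to a finite multiple of the claimed right-hand side, giving
\[
\int_{Q_\rho\cap\Omega}g_u^2\,d\nu \le c\int_{Q_{2\rho}\cap\Omega}|u-\eta|^2\Bigl(1+\tfrac{1}{\rho^2}\Bigr)d\nu + c\int_{Q_{2\rho}\cap\Omega}\bigl(g_\eta^2+|\partial_t\eta|^2\bigr)d\nu
\]
with $c=c(K)$. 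The only genuinely delicate point is verifying that the constant remains independent of $\rho$ and of $k$ in the limit — this is precisely the condition $\theta/s^2<1$, which dictates the choice $s^2=\tfrac12(\theta+1)$; beyond that the argument is the routine hole-filling iteration, entirely parallel to the proof of Lemma~\ref{caccioppoli near the initial boundary}, and no Poincaré or thickness input is needed at this stage.
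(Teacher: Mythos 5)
Your proposal is correct and follows essentially the same route as the paper: the hole-filling trick applied to the energy estimate of Lemma~\ref{energy estimate near the lateral boundary}, iteration over the radii $\rho_i$ with $s^2=\tfrac12(\theta+1)$, and passage to the limit $k\to\infty$. Your explicit verification that $\theta/s^2<1$ makes the geometric series with the factor $(\rho_i-\rho_{i-1})^{-2}$ converge is exactly the point the paper's choice of $s$ is designed for, so nothing is missing.
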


 Then we prove a parabolic version of the Poincaré inequality for the function $u-\eta$, in the vicinity of the lateral boundary. We use the energy estimate with $\sigma=2\rho$. This is the stage at which we need the assumption that $x_0$ and $\rho$ are such that $B_{2\rho}(x_0)\setminus\Omega\neq \emptyset$, and moreover we need to be close enough to the lateral boundary, i.e. that $\rho$ is small enough. These assumptions enable us to exploit the uniform thickness of $\Omega$ to obtain an upper gradient estimate for the integral average of $u-\eta$.

 \begin{theorem}[Parabolic Poincaré]\label{parabolic Poincaré}
Assume $X\setminus \Omega$ is uniformly $2$-thick. Then there exist a positive constant $\rho_0<(1/8)$diam$(X)$ and a positive constant $c=c( c_\mu,c_P,\lambda, K)$, such that for every $0<\rho<\rho_0$ and parabolic cylinder $Q_\rho=B_\rho(x_0)\times \Lambda_{\rho}(t_0)$ such that $B_{2\rho}(x_0)\setminus \Omega \neq \emptyset$, we have
 \begin{align*}
 \esssup_{t\in \Lambda_\rho \cap (0,T)} \int_{B_\rho\cap \Omega}|u-\eta |^2 \, d\mu \leq c \int_ {Q_{6 \lambda\rho}\cap \Omega_T}\left(g_u^2+g_\eta^2 +\left|\frac{\partial \eta}{\partial t}\right|^2\right)\, d \nu.
 \end{align*}
\begin{proof} 
Assume a cylinder $Q_\rho=B_\rho(x_0)\times \Lambda_\rho(t_0)$, where $x_0\in X$ is such that $B_{2\lambda\rho}(x_0)\setminus \Omega \neq \emptyset$. From Lemma \ref{energy estimate near the lateral boundary}, we have 
 \begin{align*}
 &\esssup_{\Lambda_\rho \cap (0,T)} \int_{B_\rho \cap \Omega} | u-\eta |^2\, d\mu\leq c \int_{Q_{2\rho}\cap \Omega_T} g_u^2 \, d \nu \\&\quad+   c\int_{Q_{2\rho}\cap \Omega_T} |u-\eta|^2\left(1+\frac{2}{\rho^2}\right)\, d\nu+ c \int_{Q_{2\rho}\cap \Omega_T} \left(g_\eta^2+\left|\frac{\partial \eta}{\partial t}\right|^2\right) \, d \nu,
 \end{align*}
where $c=c(K)$. For $0<\rho < M$, we can estimate $1\leq  M^2/\rho^2$ on the right hand side to obtain
 \begin{equation}\label{parabolic poincare after estimation}
 \begin{split}
&\esssup_{\Lambda_\rho \cap (0,T)} \int_{B_\rho \cap \Omega} | u-\eta |^2\, d\mu\leq c \int_{Q_{2\rho}\cap \Omega_T} g_u^2 \, d \nu\\&\quad+ \frac{c}{\rho^2} \int_{Q_{2\rho}\cap \Omega_T} |u-\eta|^2\, d\nu+c \int_{Q_{2\rho}\cap \Omega_T} \left(g_\eta^2+\left|\frac{\partial \eta}{\partial t}\right|^2\right) \, d \nu,
 \end{split}
 \end{equation}
where $c=c(K,M)$. Next we continue the mapping $u(\cdot, t)-\eta (\cdot, t)$ outside of $\Omega$ by setting $u(\cdot, t)-\eta(\cdot, t) =0$ in $X \setminus \Omega$. 
By assumption $B_{2 \rho}(x_0)\setminus \Omega \neq \emptyset$, and so there exists a point $x'\in X\setminus \Omega$, such that $B_{2 \rho}(x_0)\subset B_{4 \rho}(x')$ and $B_{4\lambda \rho}(x')\subset B_{6\lambda \rho}(x_0)$. 
 Since $X\setminus \Omega$ is uniformly $2$-thick, and then by Lemma \ref{estimate of capacity}, there exist positive constants $\rho_0<(1/8)$diameter$(X)$ and  $c=c(c_P, c_\mu)$, such that
 \begin{align*}
 \textrm{cap}_2(N_{B_{2\rho}(x')}(u-\eta),& B_{4\rho}(x'))
 \geq \textrm{cap}_2 ((X\setminus \Omega) \cap B_{2\rho}(x'), B_{4\rho}(x'))\\
 &\geq \delta \textrm{cap}_2( B_{2\rho}(x'), B_{4\rho}(x'))\geq\delta  c\frac{\mu (B_{2\rho}(x'))}{\rho^2},
 \end{align*}
 for every $0<\rho<\rho_0$. Hence, after using  Lemma \ref{Poincaré with capacity}, we can estimate that for any $0<\rho<\rho_0$
\begin{align*}
 &\frac{1}{\rho^2} \int_{Q_{2\rho}\cap \Omega_T} |u-\eta|^2\, d\nu\leq \frac{1}{\rho^2}  \int_{\Lambda_{2\rho}\cap (0,T)} \mu(B_{4\rho}(x')) 
\kint_{B_{4 \rho}(x')}|u-\eta |^2 \, d\mu\,dt\\
&\leq \frac{c}{\rho^2}  \int_{\Lambda_{2\rho}\cap (0,T)} \frac{\mu(B_{4\rho}(x')) }{\textrm{cap}_2 (N_{B_{2\rho}(x')} (u-\eta), B_{4\rho}(x'))} \int_{B_{4\lambda\rho}(x')} g_{u-\eta}^2 \, d\mu\,dt\\
&\qquad
\leq c \int_{Q_{6\lambda\rho}\cap \Omega_T}  g_{u}^2\, d\nu+c \int_{Q_{6\lambda\rho}\cap \Omega_T}  g_{\eta}^2\, d\nu,
 \end{align*}
where $c=c(c_\mu,c_P,\delta)$. Here we also used the fact that $g_{u-\eta}(\cdot,t)=0$ $\mu$-almost everywhere outside of $\Omega$. Plugging this into \eqref{parabolic poincare after estimation}  completes the proof.
 \end{proof}
 \end{theorem}

Now we start from the Caccioppoli inequality, and then combine the parabolic Poincaré inequality together with the (2,q)-Poincaré with capacity to obtain a reverse Hölder inequality. We use the self improving property of the uniform thickness to have control over the variational capacity of the zero set of $u-\eta$. Again, we need the assumption that we are close enough to the lateral boundary, in other words that $\rho$ is small enough and $B_\rho(x_0)\setminus \Omega \neq \emptyset$.

 \begin{theorem}[Reverse Hölder inequality] \label{reverse hölder lateral} Suppose that $X\setminus \Omega$ is uniformly $2$-thick. Then there exist positive constants  $\rho_0<(1/8)$diameter$(X)$, $c=c(c_\mu,c_P,\lambda, K)$  and $1<q_0<2$ such that for every $0<\rho<\rho_0$ and $Q_\rho=B_\rho(x_0) \times \Lambda_\rho(t_0)$ such that $B_{2\rho}(x_0)\setminus  \Omega\neq \emptyset$, we have
\begin{align*}
 &\frac{1}{\nu(Q_{\rho})}\int_{Q_{\rho}\cap \Omega_T}g_u^2 \, d\nu\\
 &\quad\leq \varepsilon\frac{c}{\nu(Q_{6 \lambda\rho})}  \int_{Q_{6\lambda\rho}\cap \Omega_T} g_u^2 \, d\nu+\varepsilon^{-1}c\left(\frac{1}{\nu(Q_{6\lambda\rho})}\int_{Q_{6\lambda\rho}\cap \Omega_T} g_{u}^q \, d\nu\right)^\frac{2}{q}\\
 &\qquad+(\varepsilon^{-1}+\varepsilon)\frac{c}{\nu(Q_{6\lambda\rho})}  \int_{Q_{6\lambda\rho}\cap \Omega_T} \left(g_\eta^2+\left| \frac{\partial \eta}{\partial t} \right|^2 \right)\, d\nu,
 \end{align*}
 for any positive $\eps$ and $q_0\leq q$.
\begin{proof}Let $Q_\rho=B_\rho(x_0)\times \Lambda_\rho(t_0)$, such that  $B_{2\lambda\rho}(x_0)\setminus  \Omega\neq \emptyset$. 
From Theorem \ref{Caccioppoli near the lateral boundary} we know that for every $0<\rho<M$, we have
 \begin{align*}
 &\frac{1}{\nu(Q_\rho)} \int_{Q_\rho \cap \Omega_T}g_u^{2} \, d\nu\\
 &\quad\leq \frac{c}{\rho^2\nu(Q_{2\rho})}\int_{Q_{2\rho} \cap \Omega_T} |u-\eta|^2 \, d\nu+\frac{c}{\nu(Q_{2\rho})} \int_{Q_\rho \cap \Omega_T}\left( \left|\frac{\partial \eta}{\partial t}\right|^2+g_\eta^2\right)\, d\nu,
 \end{align*}
where $c=c(K,M)$. Similarly to what was done earlier, for almost every $t\in (0,T)$, we continue the mapping $u(\cdot, t)-\eta(\cdot,t)$ to be zero outside $\Omega$, and so 
from now on the mapping $u(\cdot, t)-\eta(\cdot,t)$ is to be thought of as defined in the whole space $X$. Also, since by assumption $B_{2 \rho}(x_0)\setminus \Omega \neq \emptyset$, there exists a point $x'\in X\setminus \Omega$, such that $B_{2\rho}(x_0)\subset B_{4 \rho}(x')$, and $B_{4\lambda \rho}(x')\subset B_{6\lambda \rho}(x_0)$. Let $1<q<2$ be as in Lemma \ref{Poincaré with capacity}. We have 
\begin{align*}
&\quad\frac{1}{\rho^2 \nu(Q_{2\rho})}\int_{Q_{2\rho}\cap \Omega_T} |u-\eta|^2\,d\nu\\
&\leq\frac{1}{\rho^4} \int_{\Lambda_{2\rho}\cap (0,T)} 
\left(\vint_{B_{2\rho}}|u-\eta|^2\,d\mu\right)^{1-\frac{q}{2}}\left(\vint_{B_{2\rho}}|u-\eta|^2 \, d\mu \right)^{\frac{q}{2}}\,dt\\
&\leq \frac{c}{\rho^4}\left( \esssup_{\Lambda_{2\rho}\cap (0,T)}\vint_{B_{2\rho}}|u-\eta|^2\,d\mu\right)^{1-\frac{q}{2}}\int_{\Lambda_{2\rho}\cap (0,T)}\left(\vint_{B_{4\rho}(x')}|u-\eta|^2 \, d\mu \right)^{\frac{q}{2}}\,dt,
\end{align*}
where $c=c(c_\mu,\lambda)$. In the above expression the former factor on the right hand side can be estimated by Theorem \ref{parabolic Poincaré}, and the latter factor by Theorem \ref{Poincaré with capacity}. We obtain that for some $1< q_0< 2$, we have for every $q_0 \leq q \leq 2$ and $0<2 \rho<(1/8)$diameter$(X)$,
\begin{align*}
&\frac{1}{\rho^2\nu(Q_{2\rho})}\int_{Q_{2\rho}\cap \Omega_T}|u-\eta|^2 \, d\nu\\
&\leq \frac{c}{\rho^2}\left\{ \frac{\rho^2}{\nu(Q_{6 \lambda\rho})}\int_{Q_{6\lambda\rho}\cap \Omega_T}\left( g_{u}^2+ g_\eta^2 +\left| \frac{\partial \eta}{\partial t} \right|^2 \right)\, d\nu \right\}^{1-\frac{q}{2}}\\
&\qquad\cdot \vint_{\Lambda_{2\rho}\cap (0,T)} \frac{1}{\textrm{cap}_q(N_{B_{2\rho}(x')}(u-\eta),B_{4\rho}(x'))} \int_{B_{4\lambda\rho}(x')} g_{u-\eta}^q \, d\mu\,dt,
\end{align*}
where $c=c(c_\mu, c_P,K)$. By assumption, $X\setminus \Omega$ is uniformly $2$-thick. By Theorem \ref{self improving thickness} this implies that for some $1<q<2$ the set $X\setminus\Omega$ is also uniformly $q$-thick with some positive constant $\delta$. We use this with Lemma \ref{estimate of capacity}, to conclude that 
\begin{align*}
\textrm{cap}_q(N_{B_{2\rho}(x')}(u-\eta), B_{4\rho}(x'))&\geq \textrm{cap}_q(X\setminus\Omega \cap B_{2\rho}(x'), B_{4\rho}(x')) \\
&\geq\delta \textrm{cap}_q(B_{2\rho}(x'),B_{4\rho}(x'))\\
&\geq \delta c\frac{\mu(B_{2\rho}(x'))}{\rho^q},
\end{align*}
  for every $0<\rho<\rho_0<(1/8)$diameter$(X)$, where $c=c(c_\mu,c_P)$. Hence for every $\rho<\rho_0$, we obtain
 \begin{align*}
 &\frac{c}{\rho^2\nu(Q_{2\rho})}\int_{Q_{2\rho}\cap \Omega_T}|u-\eta|^2 \, d\nu\\
 &\leq \left\{ \frac{c}{\nu(Q_{6\lambda\rho})}\int_{Q_{6\lambda\rho}\cap \Omega_T} \left(g_{u}^2+ g_\eta^2 +\left| \frac{\partial \eta}{\partial t} \right|^2\right) \, d\nu \right\}^{1-\frac{q}{2}}\\
 &\cdot \frac{c}{\nu(Q_{6\lambda\rho})}\int_{Q_{6\lambda\rho}\cap \Omega_T} g_{u-\eta}^q \, d\nu.
 \end{align*}
 Now we can use the $\varepsilon$-Young inequality and then Hölder's inequality to conclude that for every positive $\varepsilon$ we have the estimate
 \begin{align*}
 &\frac{1}{\nu(Q_{\rho})}\int_{Q_{\rho}\cap \Omega_T}g_u^2 \, d\nu\\
 &\quad\leq \varepsilon\frac{c}{\nu(Q_{6 \lambda\rho})}  \int_{Q_{6\lambda\rho}\cap \Omega_T} g_u^2 \, d\nu+\varepsilon^{-1}c\left(\frac{1}{\nu(Q_{6\lambda\rho})}\int_{Q_{6\lambda\rho}\cap \Omega_T} g_{u}^q \, d\nu\right)^\frac{2}{q}\\
 &\qquad+(\varepsilon^{-1}+\varepsilon)\frac{c}{\nu(Q_{6\lambda\rho})}  \int_{Q_{6\lambda\rho}\cap \Omega_T} \left(g_\eta^2+\left| \frac{\partial \eta}{\partial t} \right|^2 \right)\, d\nu.
 \end{align*}
 where the positive constant $c=c(c_\mu, c_P, \lambda, K)$. 
\end{proof}
 \end{theorem}

\section{Global higher integrability}
In this section we cover the steps from the reverse Hölder inequality both near and away from the lateral boundary of $\Omega_T$, to the global higher integrability of a parabolic quasiminimizer's upper gradient.

We begin by proving a modification of Gehring's Lemma in metric spaces, to take into account the terms in the reverse Hölder inequalities that result from the initial and lateral boundary conditions. 

In the proof of this theorem the initial cylinder is divided into a good set where $g$ is bounded and into a bad set where $g$ is unbounded. At each point of the bad set, in some small enough cylinder centered at this point, we have by our previous results a reverse Hölder inequality. These cylinders are then used to form a Vitali covering of the bad set, so that we obtain the reverse Hölder inequality over to whole bad set. Finally, by an argument involving Fubini's theorem, the reverse Hölder inequality is used to establish higher integrability over the bad set.

\begin{theorem}
\label{global gehring}
Let $g\in L^2_{\trm{loc}}(0,T;\N_\trm{loc}(\Om))$ and let $f_1$, $f_2$ be non negative measurable functions defined in $\Omega_T$. Consider a parabolic cylinder $Q_{2R}(z_0)=B_{2R}(x_0)\times \Lambda_{2R}(t_0)\subset X\times \R$. Let $s$ be the constant from \eqref{eq:DoublingConsequence} and let $q$ be such that $2s/(2+s)<q<2$. Suppose that there exists a positive constant $A>1$, for which with any $z'=(x',t')$ and $\rho$ such that $Q_{A\rho}(z')\subset Q_{2R}(z_0)$, we have, after abbreviating $Q_\rho=Q_\rho(z')$, $Q_{A\rho}=Q_{A\rho}(z')$ and $B_{A\rho}=B_{A\rho}(x')$,
\begin{equation} 
\label{gekaehto}
\begin{split}
\frac{1}{\nu(Q_\rho)}&\int_{Q_\rho\cap \Omega_T}  g^2 \, d \nu
\leq
\eps\frac{1}{\nu(Q_{A\rho})}\int_{Q_{A \rho}\cap \Omega_T} g^2
\,d \nu\\&+\gamma\l(\frac{1}{\nu(Q_{A\rho})}\int_{Q_{A \rho}\cap \Omega_T} g^{{q}} \,d \nu \r)^{2/{q}}+\gamma\frac{1}{\nu(Q_{A\rho})}\int_{Q_{A \rho}\cap \Omega_T} f_1^2\,d \nu\\&\qquad+\gamma\left(\frac{1}{\mu(B_{A\rho})}\int_{B_{A \rho}\cap \Omega} f_2^q\,d \mu\right)^{2/q},
\end{split}
\end{equation}
for any $\eps>0$, where $\gamma$ may depend on $\eps$.
 Then there exists positive constants $\eps_0=\eps_0(c_\mu, A, \gamma, q)$  and $c=c(c_\mu, A,\gamma)$, such that 
\[
\begin{split}
&\left(\frac{1}{\nu(Q_R)}\int_{Q_{R} \cap \Omega_T} g^{2+\eps} \,d \nu\right)^\frac{1}{2+\eps}\leq  \l(\frac{c}{\nu(Q_{2R})}\int_{Q_{2R}\cap \Omega_T} g^2 \,d \nu\r)^{\frac{1}{2}}\\
&+\left(\frac{c}{\nu(Q_{2R})}\int_{Q_{2R}\cap \Omega_T} f_1^{2+\varepsilon}\,d \nu\right)^\frac{1}{2+\eps}+\left(\frac{c}{\mu(B_{2R})}\int_{B_{2R}\cap \Omega} f_2^{q+\varepsilon}\,d \mu\right)^{\frac{1}{q+\eps}},
\end{split}
\]
for every $0<\eps\leq \eps_0$, where we have abbreviated $Q_R=Q_R(z_0)$, $Q_{2R}=Q_{2R}(z_0)$ and $B_{2R}=B_{2R}(x_0)$.
\end{theorem}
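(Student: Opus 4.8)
The plan is to run the standard Gehring-type argument, but adapted to the parabolic scaling of the cylinders $Q_\rho = B_\rho\times\Lambda_\rho$ and with the two extra right-hand side terms $f_1$, $f_2$. First I would reduce to the case $f_1, f_2$ supported in $Q_{2R}$ by the usual restriction trick, and introduce the truncated level sets. Fix a level $\lambda$ larger than a suitable average of $g^2+f_1^2+f_2^2$ over $Q_{2R}$ (scaled), and define the "bad set" $E(\lambda) = \{z\in Q_R : g^2(z) > \lambda\}$. The key geometric point is a stopping-time / Calder\'on--Zygmund argument with respect to the parabolic cylinders: at $\nu$-a.e.\ point $z'$ of the bad set, by a Lebesgue-differentiation argument one finds a radius $\rho = \rho(z')$, as small as we like and with $Q_{A\rho}(z')\subset Q_{2R}$, so that the average of $g^2$ over $Q_\rho(z')$ exceeds $\lambda$ while the average over $Q_{A\rho}(z')$ is at most $\lambda$ (here one uses that these averages are continuous in $\rho$ and tend to $g^2(z')>\lambda$ as $\rho\to 0$; the doubling property guarantees a controlled jump when passing from $Q_\rho$ to $Q_{A\rho}$). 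A Vitali covering lemma then extracts a countable disjoint subfamily $\{Q_{\rho_i}(z_i)\}$ whose $5$-dilates cover the bad set.

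Next I would feed the reverse H\"older hypothesis \eqref{gekaehto} into each selected cylinder. On $Q_{\rho_i}(z_i)$ the left side is at least $c_\mu^{-c}\lambda$ by the stopping condition, while on the right the $\eps$-term and the $q$-term are split at the level $\lambda/2$ (or a fixed fraction) into the part where $g^q > (\lambda/2)^{q/2}$, which is estimated by $\lambda^{q/2-1}$ times the integral of $g^2$ over that part, and the part where $g^q\le(\lambda/2)^{q/2}$, which is absorbed or controlled by $\lambda$; the $f_1$ and $f_2$ contributions are kept as genuine source terms. Choosing $\eps$ small, summing over $i$ using disjointness, and using $\mu(B_{A\rho_i})\le c\,\mu(B_{\rho_i})$ gives a good-$\lambda$ inequality of the form
\[
\int_{E(\lambda)} g^2\,d\nu \;\le\; C\lambda^{1-q/2}\int_{\{g^q>c\lambda^{q/2}\}} g^q\,d\nu \;+\; C\int_{\{f_1^2>c\lambda\}} f_1^2\,d\nu \;+\; C\lambda\,\mu\bigl(\{f_2^q>c\lambda^{q/2}\}\bigr),
\]
valid for $\lambda$ above a threshold $\lambda_0$ comparable to the initial average. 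The constant $C$ depends only on $c_\mu$, $A$, $\gamma$, and on $\eps$, and the condition $2s/(2+s)<q<2$ is exactly what makes the exponent $1-q/2$ positive and the subsequent $\eps$-integration converge.

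The final step is the purely measure-theoretic one: multiply the good-$\lambda$ inequality by $\lambda^{\eps/2-1}$, integrate in $\lambda$ from $\lambda_0$ to $\infty$, apply Fubini to each term (Cavalieri's principle: $\int_0^\infty \lambda^{\eps/2-1}\int_{\{g^q>c\lambda^{q/2}\}}g^q\,d\nu\,d\lambda = c'\int g^{2+\eps}\,d\nu$, and similarly for $f_1$ and $f_2$), and absorb the resulting $\int_{Q_R} g^{2+\eps}\,d\nu$ from the right into the left provided $\eps\le\eps_0$ is small enough that its coefficient is $<1$. What remains below $\lambda_0$ contributes the $\bigl(\nu(Q_{2R})^{-1}\int g^2\bigr)^{1/2}$-type term after raising to the power $1/(2+\eps)$, and Young's / H\"older's inequality puts the $f_1$ and $f_2$ pieces in the stated form. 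The main obstacle is bookkeeping the scaling carefully: the $f_2$-term lives on balls $B_{A\rho}$ rather than cylinders $Q_{A\rho}$, so its average carries a factor $\rho^2$ relative to the cylinder average, and one must check this factor is handled consistently through the stopping-time selection and the final Cavalieri computation so that the exponent $q+\eps$ (not $2+\eps$) appears on $f_2$. A secondary delicate point is ensuring $\eps_0$ can be chosen independently of $R$ and $z_0$, which follows because every constant above depends only on $c_\mu$, $A$, $\gamma$, $q$ through the doubling inequality \eqref{eq:DoublingConsequence} and never on the particular cylinder.
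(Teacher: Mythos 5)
There is a genuine gap at the absorption step, and it is exactly the point the paper's auxiliary weight is designed to handle. Your bad set $E(\lambda)$ lives in $Q_R$, but the Vitali cylinders $Q_{A\rho(z')}(z')$ selected at its points are only required to satisfy $Q_{A\rho}\subset Q_{2R}$, and in general they stick far out of $Q_R$. Consequently your good-$\lambda$ inequality has its left-hand side over subsets of $Q_R$ and its right-hand side over level sets inside $Q_{2R}$; after the Cavalieri/Fubini integration the first right-hand term is a small multiple of $\int_{Q_{2R}\cap\{g>\delta\lambda_0\}}g^{2+\eps}\,d\nu$, \emph{not} of $\int_{Q_R}g^{2+\eps}\,d\nu$, and it cannot be absorbed into the left-hand side: that would require precisely the control of $g^{2+\eps}$ on $Q_{2R}\setminus Q_R$ which the theorem does not claim. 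The paper avoids this by introducing the parabolic distance $r(z)$ to the complement of $Q_{2R}$, the weight $\alpha(z)=\nu(Q_{2R})/\nu(Q_{r(z)/(5A)}(z))$, and the damped function $h=\alpha^{-1/2}g$: the level sets $G(\lambda)$ are those of $h$ in all of $Q_{2R}$, the stopping radii satisfy $5A\rho(z')\le r(z')$ so the dilated cylinders stay in $Q_{2R}$, the comparability \eqref{local comparability of alpha} lets one pass between $g$ and $h$ on each selected cylinder, and then both sides of \eqref{higherint2 of h at lower levels} involve level sets of the same function $h$ on the same cylinder $Q_{2R}$, so absorption is legitimate; since $1\le\alpha\le c(c_\mu,A)$ on $Q_R$, one converts back to $g$ only at the very end. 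An alternative repair is an iteration over intermediate radii $R\le r_1<r_2\le 2R$ with thresholds and covering radii depending on $r_2-r_1$ plus a standard iteration lemma, but your sketch contains neither device. Related to the same step: absorption also needs the $g^{2+\eps}$ term to be finite a priori, which the paper secures with the truncation $h_k=\min\{h,k\}$; this is missing from your outline.

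Two smaller inaccuracies. First, your displayed good-$\lambda$ inequality records the $f_2$ contribution as $C\lambda\,\mu(\{f_2^q>c\lambda^{q/2}\})$, which after the $\lambda$-integration would require $f_2\in L^{2+\eps}$; the correct form keeps $\bigl(\int_{G_{f_2}(\delta\lambda)}f_2^q\,d\mu\bigr)^{2/q}$ together with the factor $\rho(z_i')^2\,\mu(B_{\rho(z_i')})^{1-2/q}$ coming from converting a ball average into a cylinder bound, and it is exactly the hypothesis $q>2s/(2+s)$, via \eqref{eq:DoublingConsequence}, that bounds this factor by $c\,R^2\mu(B_{2R})^{1-2/q}$ uniformly over the selected cylinders — not, as you state, the positivity of $1-q/2$, which only needs $q<2$; a final H\"older step then produces the exponent $\tfrac{1}{q+\eps}$ on $f_2$ in the conclusion. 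Second, your stopping-time selection needs the threshold raised to $\lambda\ge c(c_\mu,A)\lambda_0$ so that the average over the largest admissible cylinder lies below the level (and one should not rely on continuity of $\rho\mapsto$ averages, which may fail in a metric measure space); these are minor and fixable, unlike the absorption issue above.
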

%

\begin{proof}
Assume a parabolic cylinder $Q_{2R}$ with center point $z_0=(x_0,t_0)$. Define for every $z_1=(x_1,x_2)$, $z_2=(x_2,t_2)\in X\times \R$ the parabolic distance
\begin{align*}
 \textrm{dist}_p(z_1,z_2)=d(x_1,x_2)+|t_1-t_2|^{1/2}.
\end{align*}
Using this, set for every $z\in Q_{2R}$ the functions
\begin{align*}
r(z)&=\,\textrm{dist}_p(z,(X\times \R)\setminus Q_{2R}),\\
 \alpha(z)&=\frac{\nu(Q_{2R})}{\nu(Q_{\frac{r(z)}{5A}}(z))}.
\end{align*}
From the definition of $r(z)$ it can readily be checked that $Q_{r(z)}(z)\subset Q_{2R}$ for every $z\in Q_{2R}$.
For $z\in Q_{2R}$, define
\begin{align*}
h(z)=\alpha^{-1/2}(z)g(z),
\end{align*}  
and for every $\lambda>0$, set
\begin{align*}
G(\lambda)=\{\,z\in Q_{2R}\cap \Omega_T\,:\,h(z)>\lambda\,\}.
\end{align*}
Denote
\begin{align*}
\lambda_0=\left( \frac{1}{\nu(Q_{2R})}\int_{Q_{2R}\cap \Omega_T} g^2\,d\nu \right)^{1/2}.
\end{align*}
Assume $\lambda>\lambda_0$. For $\nu$-almost every $z'\in G(\lambda)$, we have for every \\$r\in [r(z')/(5A),r(z')]$, 
\begin{align}\label{larger than}
\frac{1}{\nu(Q_r(z'))}\int_{Q_r(z')\cap \Omega_T}g^2\,d\nu \leq \frac{\alpha(z')}{\nu(Q_{2R})} \int_{Q_{2R}\cap \Omega_T}g^2\,d\nu \leq \alpha(z') \lambda^2, 
\end{align}
and by the definition of $G(\lambda)$, since $\mu$ is a positive Borel measure,
\begin{align}\label{less than}
\lim_{r\rightarrow 0} \frac{1}{\nu(Q_r(z'))}\int_{Q_r(z')\cap \Omega_T} g^2\, d\nu=g^2(z')>\alpha(z') \lambda^2.
\end{align}
Now \eqref{larger than} and \eqref{less than} imply that for $\nu$-almost every $z'\in G(\lambda)$, there exists a corresponding radius $\rho(z')\in (0,r(z')/(5A))$, for which it holds 
\begin{equation}\label{comparability of balls for g}
\begin{split}
\frac{1}{\nu(Q_{A \rho(z')}(z'))}&\int_{Q_{A \rho(z')}(z')\cap \Omega_T} g^2\, d\nu\\
&\leq \alpha(z') \lambda^2\leq \frac{1}{\nu(Q_{ \rho(z')}(z'))}\int_{Q_{\rho(z')}(z')\cap \Omega_T} g^2\, d\nu. 
\end{split}
\end{equation}
Thus by choosing $\eps=1/2$ in \eqref{gekaehto}, we can absorb the first term on the right hand side of \eqref{gekaehto} into the left hand side and obtain
\begin{align*}
\frac{1}{\nu(Q_{\rho(z')}(z'))}&\int_{Q_{\rho(z')}(z')\cap \Omega_T} g^2 \, d\nu \leq  \left( \frac{c}{\nu(Q_{A\rho(z')}(z'))}\int_{Q_{A\rho(z')}(z')\cap \Omega_T}g^q\, d\nu \right)^{2/q}\\
&+\frac{c}{\nu(Q_{A\rho(z')}(z'))}\int_{Q_{A \rho(z')}(z')\cap \Omega_T} f_1^2\,d \nu\\
&\qquad+\left(\frac{c}{\mu(B_{A\rho(z')}(z'))}\int_{B_{A \rho(z')}(z')\cap \Omega} f_2^q\,d \mu\right)^{2/q},
\end{align*}
for $\nu$-almost every $z'\in G(\lambda)$,  where $c=c(\gamma)$. This together with \eqref{comparability of balls for g} yields
\begin{equation}\label{higherint of g when centered in G}
\begin{split}
\frac{1}{\nu(Q_{5A\rho(z')}(z'))}&\int_{Q_{5A\rho(z')}(z')\cap \Omega_T} g^2 \, d\nu \\&\leq \left( \frac{c}{\mu(Q_{A\rho(z')}(z'))}\int_{Q_{A\rho(z')}(z')\Omega_T}g^q\,d\nu \right)^{2/q}\\
&\quad+\frac{c}{\nu(Q_{A\rho(z')}(z'))}\int_{Q_{A \rho(z')}(z')\cap \Omega_T} f_1^2\,d \nu\\&\qquad+\left(\frac{c}{\mu(B_{A\rho(z')}(z'))}\int_{B_{A \rho(z')(z')}\cap \Omega} f_2^q\,d \mu\right)^{2/q},
\end{split}
\end{equation}
where $c=c(A,c_\mu,\gamma)$. From the definitions of a parabolic cylinder and the parabolic distance, it follows that
\begin{align*}
2^{-1/2} r(z')\leq r(z) \leq 2 r(z') \qquad \textrm{for every } z\in Q_{r(z')}(z'),\; z'\in Q_{2R}.
\end{align*}
From this it is straightforward to check that
\begin{align*}
\begin{array}{c}Q_{r(z)}(z)\subset Q_{3r(z')}(z'),\\ Q_{r(z')}(z')\subset Q_{4r(z)}(z)\end{array} \quad \textrm{ for every }  z\in Q_{r(z')}(z'),\; z'\in Q_{2R},
\end{align*} 
and so by the doubling property of the measure there exists positive constants $c=c(c_\mu)$, $c'=c'(c_\mu)$ such that
\begin{align}\label{local comparability of alpha}
c\alpha(z')\leq \alpha(z) \leq c' \alpha (z) \qquad \textrm{ for every }z\in Q_{r(z')}(z'),\; z'\in Q_{2R}.
\end{align}
Because of this, we see from \eqref{higherint of g when centered in G} that there exists a positive constant $c=c(A,c_\mu,\gamma)$, such that for $\nu$-almost every $z'\in G(\lambda)$, after also using the fact that $\alpha(z)\geq 1$,
\begin{align}\label{higherint of h when centered in G}
\begin{split}
\frac{1}{\nu(Q_{5A \rho(z')}(z'))}&\int_{Q_{5A \rho(z')}(z')\cap \Omega_T} h^2 \, d\nu \\ &\leq \left( \frac{c}{\nu(Q_{A\rho(z')}(z'))}\int_{Q_{A\rho(z')}(z')\cap \Omega_T}h^q\,d\nu \right)^{2/q}\\
&\quad+\frac{c}{\nu(Q_{A\rho(z')}(z'))}\int_{Q_{A \rho(z')}(z')\cap \Omega_T} f_1^2\,d \nu\\&\quad+\left(\frac{c}{\mu(B_{A\rho(z')}(z'))}\int_{B_{A \rho(z')}(z')\cap \Omega} f_2^q\,d \mu\right)^{2/q}.
\end{split}
\end{align}
On the other hand, by  Hölder's inequality since $1<q<2$, and then by \eqref{local comparability of alpha}, we obtain from \eqref{comparability of balls for g}, 
\begin{align}\label{q boundedness of h}
\begin{split}
&\left( \frac{1}{\nu(Q_{A\rho(z')}(z'))}\int_{Q_{A\rho(z')}(z')\cap \Omega_T}h^q\, d\nu \right)^{(2-q)/q}\\
&\quad\leq \left( \frac{1}{\nu(Q_{A\rho(z')}(z'))} \int_{Q_{A\rho(z')}(z')\cap \Omega_T}h^2\, d\nu \right)^{(2-q)/2}\leq c\lambda^{2-q},
\end{split}
\end{align}
where $c=c(c_\mu)$. Define 
\begin{align*}
G_{f_1}(\lambda)&=\{\,z\in Q_{2R}\cap \Omega_T\,:\, f_1 > \lambda \,\},\\
G_{f_2}(\lambda)&=\{\,z \in B_{2R}\cap \Omega\,:\, f_2 > \lambda \, \}.
\end{align*}
Assume now any $\delta>0$. By \eqref{higherint of h when centered in G} and by the definitions of $G(\delta\lambda)$, $G_{f_1}(\delta\lambda)$ and $G_{f_2}(\delta\lambda)$, we have for $\nu$-almost every $z'\in G(\lambda)$,
\begin{align*}
\frac{1}{\nu(Q_{5A\rho(z')}(z'))}&\int_{Q_{5A\rho(z')}(z') \cap \Omega_T} h^2\, d\nu \\&\leq c  \delta^2 \lambda^2+\left(\frac{c}{\nu(Q_{A\rho(z')}(z'))} \int_{Q_{A\rho(z')}(z')\cap G(\delta \lambda)}h^q\,d\nu \right)^{2/q}\\
&\quad+\frac{c}{\nu(Q_{A\rho(z')}(z'))}\int_{Q_{A \rho(z')}(z')\cap G_{f_1}(\delta \lambda)} f_1^2\,d \nu\\&\quad+\left(\frac{c}{\mu(B_{A\rho(z')}(z'))}\int_{B_{A \rho(z')}(z')\cap G_{f_2}(\delta \lambda)} f_2^q\,d \mu\right)^{2/q}.
\end{align*}
By \eqref{local comparability of alpha} and \eqref{comparability of balls for g}, we can now choose a small enough positive number $\delta(c_\mu, A, \gamma)<1$ to absorb the first term on the right hand side into the left hand side. We obtain a positive $c=c(A,c_\mu,\gamma)$, such that for $\nu$-almost every $z'\in G(\lambda)$ and any $\lambda>\lambda_0$, after using \eqref{q boundedness of h},
\begin{equation}\label{higherint2 of h}
\begin{split}
&\frac{1}{\nu(Q_{5A\rho(z')}(z'))}\int_{Q_{5A\rho(z')}(z')} h^2\, d\nu \\
&\leq  \lambda^{2-q} \frac{c}{\nu(Q_{A \rho(z')}(z'))} \int_{Q_{A\rho(z')}(z')\cap G(\delta \lambda)}h^q\,d\nu\\
&\quad+\frac{c}{\nu(Q_{A\rho(z')}(z'))}\int_{Q_{A \rho(z')}(z')\cap G_{f_1}(\delta \lambda)} f_1^2\,d \nu\\&\quad+\left(\frac{c}{\mu(B_{A\rho(z')}(z'))}\int_{B_{A \rho(z')}(z')\cap G_{f_2}(\delta \lambda)} f_2^q\,d \mu\right)^{2/q}.
\end{split} 
\end{equation} 

The collection $\{\,Q_{A\rho(z')}(z')\,:\,z'\in G(\lambda)\,\}$ is now an open cover of $G(\lambda)$. By the Vitali covering lemma, there exists a countable and pairwise disjoint subcollection $\{\,Q_{A\rho(z_i')}(z_i')\,:\,z_i'\in G(\lambda)\,\}_{i=1}^\infty$, such that 
\begin{align*}
G(\lambda)\subset \bigcup_{i=1}^\infty Q_{5 A \rho(z_i')}(z_i')\subset Q_{2R}.
\end{align*}
The last inclusion follows from the fact that $5A \rho(z)\leq r(z)$. This property is the reason why we introduced the number $5$ into the proof earlier. Now we can write for any $\lambda>\lambda_0$, after multiplying inequality \eqref{higherint2 of h} with $\nu(Q_{A\rho(z')}(z'))$ and using the doubling property of $\mu$,
\begin{equation*}
\begin{split}
&\int_{G(\lambda)}h^2\,d\nu\leq \sum_{i=1}^\infty \int_{Q_{5A\rho(z_i')}(z_i')} h^2\, d\nu\\
&\leq  \sum_{i=1}^\infty \left(c \lambda^{2-q}\int_{Q_{A\rho(z_i')}(z_i')\cap G(\delta \lambda)}h^q\,d\nu+c\int_{Q_{A \rho(z_i')}(z_i')\cap G_{f_1}(\delta \lambda)} f_1^2\,d \nu\right.\\
&\left.\qquad+c\frac{(\rho(z'_j))^2 \mu(B_{\rho(z_i')}(z_j'))}{\mu(B_{\rho(z_i')}(z_j'))^{2/q}}\left(\int_{B_{A \rho(z_i')}(z_i')\cap G_{f_2}(\delta \lambda)} f_2^q\,d \mu\right)^{2/q} \right),
\end{split}
\end{equation*}
where $c=c(c_\mu, A, \gamma)$. Since by assumption $s> 0$ in \eqref{eq:DoublingConsequence}, for each $i$ we have
\begin{align*}
(\rho(z_i'))^2\mu(B_{\rho(z_i')}(z_i'))^{1-2/q}&\leq c\left(\frac{\mu(B_{2R}(x_0))}{(2R)^s}\right)^{1-2/q} (\rho(z_i'))^{2+s(1-2/q)}\\
&\leq c (\mu(B_{2R}(x_0)))^{1-2/q} R^2,
\end{align*}
for every $2s/(2+s)< q<2$, where $c=c(c_\mu)$. Hence
\begin{equation}\label{higherint2 of h at lower levels} 
\begin{split}
\int_{G(\lambda)}h^2\,d\nu &\leq c\lambda^{2-q}\int_{G(\delta \lambda)} h^q\, d\nu + c\int_{G_{f_1}(\delta \lambda)} f_1^2 \,d \nu\\
&\quad+ c (\mu(B_{2R}(x_0)))^{1-2/q} R^2 \left( \int_{G_{f_2}(\delta \lambda)}f_2^q \, d\mu \right)^{2/q}.
\end{split}
\end{equation}
 From now on the higher integrability result is a consequence of \eqref{higherint2 of h at lower levels} and Fubini's theorem. To see this, we integrate
over $G(\lambda_0)$ and use Fubini's theorem to obtain
\begin{equation*}
\begin{split}
\int_{G(\lambda_0)}&h^{2+\eps} \, d \nu=
\int_{G(\lambda_0)}\l(\int_{\lambda_0}^{h} \eps \lambda^{\eps-1} \,d
\lambda\,+(\lambda_0)^{\eps}\r)  h^2 \,d\nu
\\
&\hspace{-2 em}=
  \int_{\lambda_0}^{\infty}
\eps \lambda^{\eps-1}\int_{G(\lambda)}h^2  \,d \nu \,d \lambda\, +
(\lambda_0)^{\eps} \int_{G(\lambda_0)} h^2 \,d \nu,
\end{split}
\end{equation*}
and now by \eqref{higherint2 of h at lower levels}
\begin{align*}
 \int_{\lambda_0}^{\infty}
 \eps\lambda^{\eps-1}\int_{G(\lambda)}h^2  \,d \nu \,d \lambda\, \leq  c\int_{\lambda_0}^{\infty} \varepsilon \lambda^{\varepsilon+1-q} \int_{G(\delta \lambda)}h^q\,d\nu\, d\lambda\\
 +c\int_{\lambda_0}^{\infty}\eps\lambda^{\varepsilon-1} \int_{G_{f_1}(\delta \lambda)} f_1^2\, d\nu\,d\lambda\\
 +c (\mu(B_{2R}(x_0)))^{1-2/q} R^2 \int_{\lambda_0}^{\infty}\eps\lambda^{\varepsilon-1}\left( \int_{G_{f_2}(\delta \lambda)} f_2^q \, d\mu \right)^{2/q}\,d\lambda.
\end{align*}
By Fubini's
theorem again, we see that
\begin{align*}
 &\int_{\lambda_0}^{\infty} \eps
 \lambda^{\eps+1-{q} } \int_{G(\delta \lambda)} h^{{q}} \,d \nu\,d \lambda\, +
\lambda_0^{\eps} \int_{G(\lambda_0)} h^2 \,d \nu
\\&\quad= 
 \eps\int_{G(\delta \lambda_0)} \l(\int_{\lambda_0}^{h/\delta}
 \lambda^{\eps-1+2-{q} } \,d \lambda\r) h^{{q}} \,d \nu\, +
\lambda_0^{\eps} \int_{G(\lambda_0)} h^2 \,d \nu\\&\qquad\leq
\frac{\eps}{\delta^{2+\varepsilon-q}(\eps+2-{q}) } \int_{G( \lambda_0)}
 h^{\eps+2} \,d \nu+
\lambda_0^{\eps} \int_{G(\delta\lambda_0)} h^2 \,d \nu,
\end{align*}
where $c=c(A,c_\mu,\gamma)$.  Observe that in the last step we also used the fact that $h^{\eps+2}\leq
  \lambda_0^{\eps} h^2$  in $G(\delta
  \lambda_0)\setminus G(\lambda_0)$. In similar fashion we obtain
  \begin{align*}
  \int_{\lambda_0}^{\infty}\eps\lambda^{\varepsilon-1} \int_{G_{f_1}(\delta \lambda)} f_1^2\, d\nu\,d\lambda\leq \delta^{-\varepsilon} \int_{Q_{2R}\cap \Omega_T} f_1^{2+\varepsilon}\, d\nu,
  \end{align*}
  and
  \begin{align*}
  \int_{\lambda_0}^{\infty}\eps\lambda^{\varepsilon-1}&\left( \int_{G_{f_2}(\delta \lambda)} f_2^q \, d\mu \right)^{2/q}\,d\lambda\\ &\leq \left( \int_{G_{f_2}(\delta \lambda_ 0)} f_2^q \, d\mu \right)^{2/q-1}\delta^{-\varepsilon} \int_{B_{2R}\cap \Omega_T} f_2^{q+\varepsilon}\, d\mu\\
& \qquad \leq \delta^{-\eps}\mu(B_{2R}(x_0))^{\frac{\varepsilon}{q+\varepsilon}(2/q-1)}\left(\int_{B_{2R}\cap \Omega} f_2^{q+\varepsilon}\, d\mu\right)^{\frac{2+\eps}{q+\eps}},
  \end{align*}
where in the final step we have used Hölder's inequality. We can now choose a positive $\eps=\eps(c_\mu, A, \gamma, q)$ small enough
to absorb the term containing $h^{2+\eps}$ into the left hand side of \eqref{higherint2 of h at lower levels}, and
conclude that
\begin{equation}
\label{melkein}
\begin{split}
&\int_{G(\lambda_0)} h^{2+\eps} \,d \nu  \leq c(\lambda_0)^{\eps}
\int_{G(\delta\lambda_0)} h^2 \,d \nu+c\int_{Q_{2R}\cap \Omega_T} f_1^{2+\varepsilon}\, d\nu\\
&\qquad+ c\mu(B_{2R}(x_0))^{(1-\frac{\eps}{q+\varepsilon})(1-2/q)}R^2\left(\int_{B_{2R}\cap \Omega} f_2^{q+\varepsilon}\, d\mu\right)^{\frac{2+\eps}{q+\eps}},
\end{split}
\end{equation}
where $c=(c_\mu, A,\gamma)$. In case the term containing $h^{2+\varepsilon}$ is infinite, we replace $h$ by $h_k=\min\{h,k\}$ where $k>\lambda$. Starting from \eqref{higherint2 of h at lower levels} we estimate that
\begin{equation}\label{higherint for cutoff}
\begin{split}
\int_{\{h_k> \lambda \}} h_k^{2-q} \, d\zeta & \leq c \lambda^{2-q} \int_{\{h_k> \delta \lambda \}} d \zeta+ c\int_{G_{f_1}(\delta \lambda)} f_1^2 \,d \nu\\
&\quad+ c (\mu(B_{2R}(x_0)))^{1-2/q} R^2 \left( \int_{G_{f_2}(\delta \lambda)}f_2^q \, d\mu \right)^{2/q}.
\end{split}
\end{equation} 
where $d\zeta= h^q \, d\nu$. Performing now as above the calculations involving Fubini's theorem yields
\begin{align*}
\int_{\{h_k> \lambda_0 \}} &h_k^{2+\varepsilon-q} \, d\zeta \leq \varepsilon c \int_{\{h_k> \lambda_0 \}} h_k^{2+\varepsilon-q} \, d\zeta +\lambda_0^\varepsilon \int_{\{h_k> \delta \lambda_0\}}h_k^{2-q}\, d\zeta\\
&+c\int_{Q_{2R}\cap \Omega_T} f_1^{2+\varepsilon}\, d\nu + c\mu(B_{2R}(x_0))^\frac{q-2}{q+\varepsilon}R^2\left(\int_{B_{2R}\cap \Omega} f_2^{q+\varepsilon}\, d\mu\right)^{\frac{2+\eps}{q+\eps}}.
\end{align*}
Now we can absorb the term containing $h_k^{2+\varepsilon-q}$ into the left hand side side, and finally let $k\rightarrow \infty$ to obtain \eqref{melkein}.

Finally, from the definitions of the parabolic distance and the parabolic cylinder, it is again straighforward to check that $Q_R\subset Q_{4 r(z)}(z)$ for every $z\in Q_R$. Hence, by the doubling property of the measure,
\begin{align*}\label{boundedness of alpha}
\alpha(z) \leq \frac{\nu(Q_{2R})}{\nu(Q_R)}\frac{\nu(Q_{4 r(z)}(z))}{\nu(Q_{\frac{r(z)}{5A}}(z))}\leq  c_1, \qquad \textrm{for every} \,z\in Q_R, 
\end{align*}
where $c_1=c_1(c_\mu,A)>0$. On the other hand, clearly $\alpha(z)\geq 1$ for every $z\in Q_{2R}$. Now \eqref{melkein} and the definition of $\lambda_0$ imply that
\begin{align*}
\int_{Q_R\cap \Omega_T} g^{2+\varepsilon} \, d\nu &\leq c_1^{\frac{2+\varepsilon}{2}}\left((\lambda_0)^\eps\int_{Q_R\setminus G(\lambda_0)} h^2 \, d\nu+\int_{G(\lambda_0)} h^{2+\eps} \, d\nu \right)\\
&\leq c\frac{1}{(\nu(Q_{2R}))^{\eps/2}}\left(\int_{Q_{2R}\cap \Omega_T} g^2 \, d\nu \right)^{\frac{2+\eps}{2}}
+c\int_{Q_{2R}\cap \Omega_T} f_1^{2+\varepsilon}\, d\nu\\
&\qquad+ c\mu(B_{2R}(x_0))^{\frac{q-2}{q+\eps}}R^2\left(\int_{B_{2R}\cap \Omega} f_2^{q+\varepsilon}\, d\mu\right)^{\frac{2+\eps}{q+\eps}},
\end{align*}
 where $c=c(c_\mu, A,\gamma)>0$. From this expression the proof can readily be completed.
\end{proof}

We have now all the necessary pieces to prove global higher integrability. Note however, that because using the uniform thickness condition, needed for the reverse Hölder inequality, is valid only when close enough to the lateral boundary, the ratio between $\rho_0$  and the radius $R$ of the cylinder where we want to prove higher integrability affects the constants in the final estimate.
\begin{theorem}[Global higher integrability]
\label{thm:global-high-int}
Let $u\in L^2(0,T;N^{1,2}(\Omega))$ be a parabolic quasiminimizer in $\Omega_T$, where $\Omega$ is such that $X\setminus \Omega$ is uniformly $2$-thick, and that $\eta:[0,T)\times \Omega \mapsto \R$, where $\eta \in W^{1,2}(0,T;N^{1,2}(\Omega))$ and $\eta (x,0)\in N^{1,2}(\Omega)$, sets a parabolic boundary condition for $u$, as described in section \ref{standing assumptions}.  Let $\rho_0$ be the constant from Lemma \ref{reverse hölder lateral}. 

Suppose that we also have $\eta \in W^{1,2+\varepsilon'}(0,T;N^{1,2+\eps'}(\Omega))$ for some positive $\varepsilon'$. Then there exists a positive constant $\varepsilon$ and a positive constant $c=c(c_\mu,c_P,\lambda,K, \max\{1,R/\rho_0\})$, such that for every $Q_R=B_R \times \Lambda_R\subset X\times (-\infty,T)$, we have
\[
\begin{split}
&\left(\frac{1}{\nu(Q_R)}\int_{Q_{R} \cap \Omega_T} g_u^{2+\eps} \,d \nu\right)^\frac{1}{2+\eps}\leq  \l(\frac{c}{\nu(Q_{2R})}\int_{Q_{2R}\cap \Omega_T} g_u^2 \,d \nu\r)^{\frac{1}{2}}\\
&+\left(\frac{c}{\nu(Q_{2R})}\int_{Q_{2R}\cap \Omega_T} g_\eta^{2+\varepsilon}\,d \nu\right)^\frac{1}{2+\eps}+\left(\frac{c}{\nu(Q_{2R})}\int_{Q_{2R}\cap \Omega_T} \left| \frac{\partial \eta}{\partial t}\right|^{2+\varepsilon}\,d \nu\right)^{\frac{1}{2+\eps}}\\
&\qquad+\left(\frac{c}{\mu(B_{2R})}\int_{B_{2R}\cap \Omega} g_\eta^{q+\eps}(x,0)\,d \mu\right)^{\frac{1}{q+\eps}},
\end{split}
\]
where $\eps<\eps'$, $1<q<2$ and $q+\eps<2$.

In case $Q_{2R}$ is such that $B_{2R}\subset \Omega$, we obtain a stronger estimate, in the sense that the second and third term on the right hand side of the above expression can be dropped, and in this case $c=c(c_\mu,c_P,\lambda,K)$. Moreover, in this case we only need to assume that $u$ satisfies the initial condition \eqref{initial condition 2} with some $\eta\in N^{1,2}(\Omega)$.
 
\begin{proof} Assume a parabolic cylinder $Q_{2R}=B_{2R}\times \Lambda_{2R}$. In the case $B_{2R} \cap \Omega = \emptyset$, the claim is true. In the case $B_{2R}\subset \Omega$, then by Lemma \ref{reverse hölder away from boundary}, inequality \eqref{gekaehto} holds with $A=2\lambda$, $f_1=0$ and $f_2=g_\eta(x,0)$ for every $z'$ and $\rho$ such that  $Q_{2\lambda \rho}(z')\subset Q_{2R}$. Theorem \ref{global gehring} then implies the result.

Let then $B_{2R}$ be such that $B_{2R}\cap \Omega \neq \emptyset$ and $B_{2R} \setminus \Omega \neq \emptyset$. We set $A=\max\{6\lambda^2, 
2R/\rho_0\}$, where $\rho_0$ is the constant from Lemma \ref{reverse hölder lateral}. Assume $z'$ and $\rho$ are such that  $Q_{A \rho}(z')\subset Q_{2R}$. In case $B_{2\lambda \rho}(z')\subset \Omega$, we use Lemma \ref{reverse hölder away from boundary}. In case $B_{2\lambda \rho}(z')\setminus\Omega\neq \emptyset$, since necessarily $\rho<\rho_0$, we can use Lemma \ref{reverse hölder lateral}. In both cases, after estimating from above on the right hand side in the former case, we obtain inequality \eqref{gekaehto} with $A=\max\{6\lambda^2, 
2R/\rho_0\}$ and 
\begin{align*}
f_1=g_\eta+\left| \frac{\partial \eta}{\partial t}\right|, \qquad f_2=g_\eta(x,0).
\end{align*}
Theorem \ref{global gehring} now completes the proof.
\end{proof}
\end{theorem}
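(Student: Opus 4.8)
The plan is to read the result off from the abstract Gehring-type Theorem~\ref{global gehring}: for the fixed quasiminimizer $u$, with $g=g_u$, it suffices to verify that hypothesis \eqref{gekaehto} holds uniformly over every admissible sub-cylinder $Q_{A\rho}(z')\subset Q_{2R}$, for an appropriate choice of the dilation constant $A$ and of the auxiliary functions $f_1,f_2$, and then invoke Theorem~\ref{global gehring} verbatim. All of the analytic content is already packaged into the two reverse H\"older inequalities proved in Sections~\ref{away from the lateral boundary} and~\ref{Estimates near the lateral boundary}; the present proof is a case analysis that fits them into the template \eqref{gekaehto}.

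First I would dispose of the trivial configuration $B_{2R}\cap\Omega=\emptyset$, where $Q_{2R}\cap\Omega_T=\emptyset$ and the left-hand side vanishes. Next, in the interior configuration $B_{2R}\subset\Omega$, every sub-cylinder with $Q_{2\lambda\rho}(z')\subset Q_{2R}$ automatically satisfies $B_{2\rho}(x')\subset\Omega$, so the reverse H\"older inequality away from the lateral boundary (Section~\ref{away from the lateral boundary}) applies; it is exactly of the form \eqref{gekaehto} with $A=2\lambda$, $f_1\equiv0$ and $f_2=g_\eta(\cdot,0)$, and only the initial condition on $\eta$ is used. Theorem~\ref{global gehring} then gives the asserted estimate with the second and third right-hand terms absent and $c=c(c_\mu,c_P,\lambda,K)$.

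The substantive configuration is $B_{2R}\cap\Omega\neq\emptyset$ and $B_{2R}\setminus\Omega\neq\emptyset$. Here a sub-cylinder $Q_{A\rho}(z')$ may or may not meet $X\setminus\Omega$, so on each one I would apply whichever reverse H\"older inequality is available: if $B_{2\lambda\rho}(x')\subset\Omega$, the interior one of Section~\ref{away from the lateral boundary}; otherwise the one near the lateral boundary from Section~\ref{Estimates near the lateral boundary}, which carries the capacity/thickness machinery and the extra $g_\eta$, $\partial\eta/\partial t$ terms. The one point that needs care is to choose a \emph{single} constant $A$ that (i) dominates both inflation factors $2\lambda$ and $6\lambda$ so that both inequalities conform to \eqref{gekaehto}, and (ii) is large enough that $Q_{A\rho}(z')\subset Q_{2R}$ forces $\rho<\rho_0$, where $\rho_0$ is the threshold below which the uniform $2$-thickness of $X\setminus\Omega$ may be invoked in the lateral estimate. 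The choice $A=\max\{6\lambda^2,\,2R/\rho_0\}$ achieves both, and is precisely why the final constant depends on $\max\{1,R/\rho_0\}$. Since in the interior alternative the right-hand side is dominated by the lateral one ($f_1\equiv0\le g_\eta+|\partial\eta/\partial t|$, and the $f_2$ terms agree), \eqref{gekaehto} holds with this $A$, $f_1=g_\eta+|\partial\eta/\partial t|$, $f_2=g_\eta(\cdot,0)$, and Theorem~\ref{global gehring} completes the proof: the term $f_1^{2+\varepsilon}$ is integrable because $\eta\in W^{1,2+\varepsilon'}$ and one takes $\varepsilon<\varepsilon'$, while $g_\eta^{q+\varepsilon}(\cdot,0)$ is integrable because $\eta(\cdot,0)\in N^{1,2}(\Omega)$, $B_{2R}$ has finite measure, and $q+\varepsilon<2$.

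The main obstacle, beyond this bookkeeping, is the calibration of $A$ against $\rho_0$ and $\lambda$ so that the geometric hypotheses of both reverse H\"older inequalities are respected simultaneously on every admissible sub-cylinder. One should also check that the exponent range $2s/(2+s)<q<2$ required by Theorem~\ref{global gehring} is compatible with the $q\in(1,2)$ produced by the self-improvement results (Remark~\ref{poincare_remark} and Theorem~\ref{self improving thickness}): since $s>0$ one has $2s/(2+s)<2$, and by H\"older's inequality $q$ may always be lowered toward $1$, so a common admissible exponent exists after shrinking $q$ if necessary. Fixing such a $q$ and then $\varepsilon\le\min\{\varepsilon_0,\varepsilon'\}$ with $q+\varepsilon<2$ closes the argument.
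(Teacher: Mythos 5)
Your proposal matches the paper's proof essentially verbatim: the same three-case split, the same calibration $A=\max\{6\lambda^2,\,2R/\rho_0\}$ forcing $\rho<\rho_0$ in the lateral case, the same choices $f_1=g_\eta+\left|\partial\eta/\partial t\right|$, $f_2=g_\eta(\cdot,0)$ (with $f_1=0$ in the interior case), and the same final appeal to Theorem~\ref{global gehring}. The only quibble is your aside on exponents: both constraints $q\geq q_0$ and $q>2s/(2+s)$ are lower bounds, so the common admissible exponent is obtained by taking $q$ close to $2$ (H\"older's inequality lets you raise, not lower, the exponent on the right-hand side), not by shrinking $q$ toward $1$.
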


\def\cprime{$'$} \def\cprime{$'$} \def\cprime{$'$} \def\cprime{$'$}


\label{viimeinensivu}

\end{document}